\DeclarePairedDelimiter\ceil{\lceil}{\rceil}
\DeclarePairedDelimiter\floor{\lfloor}{\rfloor}
\def\BState{\State\hskip-\ALG@thistlm}
\newcolumntype{P}[1]{>{\centering\arraybackslash}p{#1}}
\newcolumntype{M}[1]{>{\centering\arraybackslash}m{#1}}
\newcommand{\epc}{\hspace{1pc}}
\newtheorem{theorem}{Theorem}
\newtheorem{proposition}[theorem]{Proposition}
\begin{document}

\title{Hierarchical Optimization of Charging Infrastructure Design and Facility Utilization}

\author{Amir~Mirheli~
	and~Leila~Hajibabai 
	\thanks{A. Mirheli is in Operations Research program with the Department of Industrial and Systems Engineering, North Carolina State University, NC 27695, USA. e-mail: amirhel@ncsu.edu .}
	\thanks{L. Hajibabai is with the Department of Industrial and Systems Engineering, North Carolina State University, NC 27695, USA. e-mail: lhajiba@ncsu.edu, to whom correspondence should be addressed.}
}

\markboth{}
{Mirheli and Hajibabai 
	: Hierarchical Optimization of Electric Vehicle Charging Infrastructure Design and Facility Logistics}

\maketitle

\begin{abstract}%
This study proposes a bi-level optimization program to represent the electric vehicle (EV) charging infrastructure design and utilization management problem with user-equilibrium (UE) decisions. The upper level aims to minimize total facility deployment costs and maximize the revenue generated from EV charging collections, while the lower level aims to minimize the EV users' travel times and charging expenses.
An iterative technique is implemented to solve the bi-level mixed-integer non-linear program that generates theoretical lower and upper bounds to the bi-level model and solves it to global optimality. A set of conditions are evaluated to show the convergence of the algorithm in a finite number of iterations. The numerical results, based on three demand levels, indicate that the proposed bi-level model can effectively determine the optimal charging facility location, physical capacity, and demand-responsive pricing scheme. The average charging price in medium demand level is increased by 38.21\% compared to the lower level demand due to the surge in charging needs and highly utilized charging stations.
\end{abstract}

\begin{IEEEkeywords}
	global optimal; bi-level; hierarchical optimization; network design; dynamic pricing; equilibrium; electric vehicle; charging facility.
\end{IEEEkeywords}

\IEEEpeerreviewmaketitle

\section{Introduction}\label{s1}
\IEEEPARstart{T}{he} 
adoption of fully electric mobility technology, among alternative-fuel vehicles, can be considered as a promising solution to reduce environmental pollution and energy consumption caused by transportation systems. The trade-off between availability and cost of public charging resources can affect the large-scale adoption of the technology. Therefore, an effective resolution would be to locate sufficient charging facilities to serve the public charging demand in the transportation network. On the other hand, managing the charging load over time and space at the facilities will become very important: as the EV penetration rate grows, a large number of EVs will connect to the electricity grid to charge. Lack of proper management can lead to additional peak loads at certain times-of-day due to home charging attempts during evenings, end-of-day charging of EV fleets when returning to their businesses, among other factors. Consequently, large-scale EV penetration rates can lead to more frequent load peak occurrence at high magnitudes for long durations. Therefore, managing the charging facilities will be crucial to help distribute the charging load over the utilization periods.

This paper aims to determine optimal location and physical capacity for public charging infrastructure and applies a dynamic pricing strategy to manage the proposed charging facilities in an integrated infrastructure design and utilization management framework.
The proposed problem involves two entities, i.e., charging network operator (a.k.a. agency) and EV users, with hierarchical decisions, where the realized outcome of a decision made by the agency affects (is affected by) the users' decisions who seek to optimize their own outcomes. The conflicting objectives of the agency and users, sharing a competitive environment, can be represented by a bi-level optimization model that also captures the interactions between the two entities. 
The bi-level model structure can appropriately incorporate the two perspectives, where the charging agency (i.e., leader) defines its objectives in the upper level and passes the decisions to EV users (i.e., followers) in the lower level using a non-cooperative game theoretical strategy. 
The reaction of each entity to decisions taken by the other is captured with no binding agreement, where each player seeks to maximize its benefits.

The proposed bi-level optimization program represents the EV charging network design and utilization management under user-equilibrium (UE) decisions. The upper-level formulation minimizes total facility deployment costs and maximizes the revenue obtained from collecting EV charging fees. 
The lower-level model aims to minimize the EV users' travel and charging costs. 
Hence, the agency (upper-level) determines the charging locations, physical capacity, and charging pricing scheme, while EV users (lower-level) make charging decisions, based on appropriate charger availability, to meet their charging demand. 
Figure \ref{fig:introduction} presents the interactions between charging facility design (i.e., location and physical capacity) and EV users' charging choice (e.g., travel distance, waiting time at charging facilities). %
\vspace{-2mm}
\begin{figure}[h]
	\begin{center}
		\includegraphics[width=3.2 in]{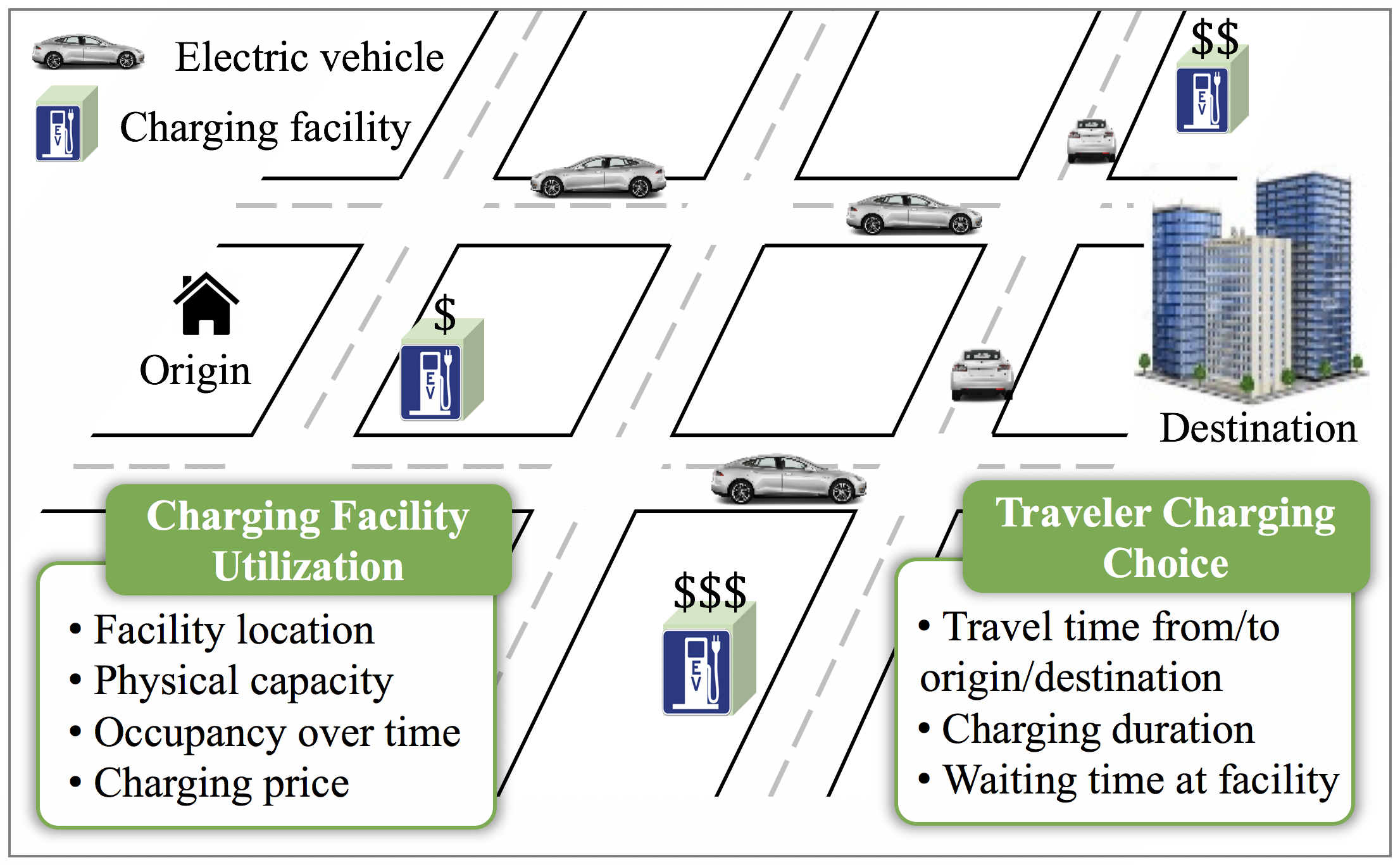}
		\vspace{-7mm}
		\caption{Interactions between EV charging facility utilization and EV travel flow.}
		\label{fig:introduction}
	\end{center}
\end{figure}
	\vspace{-4mm}
We utilize a technique proposed by Mitsos \cite{mitsos2010global} that solves the problem to global optimality by generating theoretical lower and upper bounds to the proposed bi-level problem. The proposed approach is applied to a hypothetical case study to evaluate the applicability of the proposed formulation and methodology to solve the EV charging network design and dynamic utilization management. The numerical results indicate that the algorithm can provide high quality solutions. A sensitivity analysis is conducted to study an alternative procedure in generating the cuts in the lower bounding procedure. Furthermore, a real-world dataset in Long Island, NY has been utilized to evaluate the computational efficiency of the algorithm.

The remainder of this paper is organized as follows. Section \ref{sec:Literature} introduces the existing literature in the charging station location domain and bi-level optimization methods. Section \ref{sec:modelformulation} presents the mathematical model formulation for dynamic charging facility location problem. Section \ref{sec:Algorithm} explains the proposed approach to solve the problem and Section \ref{sec:numericalresults} presents the numerical results. Finally, Section \ref{sec:conclusion} concludes the paper and discusses future research directions.
\vspace{-2mm}

\section{Literature Review}\label{sec:Literature}
We study relevant literature on (i) charging network design and operation in Section \ref{sec:Lit1} and (ii) bi-level optimization strategies in Section \ref{sec:Lit2}, as follows.
\vspace{-3mm}

\subsection{Charging Infrastructure Management}\label{sec:Lit1}
This section first summarizes a category of research, solely on the EV agency perspective, that focuses on the classical location models (e.g., daskin2011network,drezner2001facility) for re-charging EVs. 
The proposed models (i) capture the need for re-visiting EV facilities to replenish charge or fuel and (ii) introduce the range limitation concept to charging facility location problems. For example, Zheng and Peeta \cite{zheng2017routing} have researched optimal facility location problems with $p$-stop limit problem, where EVs should drive on paths with $p$ stops in a feasible range for intercity trips. Bahrami et al. \cite{bahrami2017complementarity} have also formulated a constrained shortest path problem under EV travel range limitation. 
In a related study, Zheng et al. \cite{zheng2017traffic} have used a bi-level structure, where the upper level determines the optimal location of charging facilities that minimizes total costs, including travel time and energy consumption, under range limitation constraints and the lower level determines EV traffic flow with regard to the feasible travel paths.
Besides, Zhang et al. \cite{zhang2017incorporating} have developed a flow re-fueling location model to find the optimal level-3 EV charging locations including the number of charging modules at each station. The problem is solved heuristically utilizing a forward strategy over multiple time periods considering dynamic demands and capacity constraints. 
Li et al. \cite{li2016design} have investigated EV sharing system design, including the fleet size and charging station deployment, that aims to minimize the cost of facility construction, transportation, and vehicle balancing under dynamic trip demands. This study employs a continuum approximation approach to find near-optimal solutions to the EV sharing system design problem.
Furthermore, Arslan and Karasan \cite {arslan2016benders} have explored charging infrastructure location design for EVs and plug-in hybrid vehicles using an arc-cover formulation that aims to maximize the EV travel mileage. This study employs a Benders decomposition technique to solve the problem. 
Similarly, Liu and Wang \cite{liu2017locating} have studied the deployment of different types of charging facilities (e.g., plug-in station, static wireless station, and dynamic wireless charging lane) to minimize public social costs under routing choice of battery electric vehicle (BEV) users. They have proposed a tri-level model as a black-box optimization program and solved it via a response surface approximation technique following a stochastic radial basis function method. 
Besides, Hof et al. \cite{hof2017solving} have studied a capacitated location-routing problem considering battery swap stations to minimize the total cost of facility construction and routing. An adaptive variable neighborhood search algorithm (originally designed for vehicle routing problems with intermediate stops) is used to solve the problem.
On the other hand, several studies have analyzed the impact of incentives on BEV adoption rates \citep{nie2016optimization} and EV routing decisions \citep{he2014network, xu2017network}. He et al. \cite{he2014network}, in particular, have studied the effect of travel range limitation on BEV users' path choice and equilibrium flow when charging facilities are sparse in the network.
Xu et al. \cite{xu2017joint} have developed a mixed logit model to study BEV users' choice of (i) charging mode (e.g., normal versus fast) and (ii) location of charging facilities (e.g., home versus public stations). The study identifies a set of factors that play a significant role in users' decisions, e.g., battery capacity and initial state-of-charge (SOC).

Another line of research on the EV infrastructure design focuses on the inter-relationships between specific destinations and demand variation of EV facilities. For instance, Momtazpour et al. \cite{momtazpour2012coordinated} have studied a coordinated clustering technique to identify appropriate charging locations based on several factors (e.g., residential ratios, electricity loads, and charging needs of each facility) to address the EV user demand from specific population groups (e.g., income levels and geographical locations).
By applying hierarchical clustering method, Ip et al. \cite{ip2010optimization} have determined the demand clusters of BEVs, representing road traffic information, to allocate the charging stations over an urbanized area under budget constraints. 
Simulation-based optimization methods have also been extensively used to identify appropriate deployment of charging facilities under certain criteria. 
For instance, Xi et al. \cite{xi2013simulation} have presented an integer programming model that aims to maximize the EV service level using slow-charging technologies for privately-owned EVs. 
They have analyzed the impacts of EV driving patterns and charging locations on traffic flows by simulating the inter-relationships between charging facility locations and associated service levels.
Moreover, Jung et al. \cite{jung2014stochastic} have developed a model in a bi-level simulation optimization form that integrates multiple EV charger allocation problems. The model aims to minimize the queuing delay under EV charger capacity constraints in the upper level, while minimizing the passenger waiting time in a taxi dispatch simulation subject to EV driving range and passenger detour length in the lower level problem.
Dong et al. \cite{dong2014charging} have utilized GPS-based travel survey data in the greater Seattle area and applied a genetic algorithm to find the location of charging stations while maximizing electric miles and minimizing the number of missed trips (i.e., when the required battery level to reach to a destination exceeds the remaining level).
Sweda and Klabjan \cite{sweda2011agent} have presented an agent-based decision support system, with underlying interactions between agents, that adopts the patterns observed in residential EV ownerships and driving activities to identify the best strategy for the deployment of charging facilities. 
Similarly, Asamer et al. \cite{asamer2016optimizing} have described another agent-based decision support system that captures the charging demand of EV taxis to deploy fast-charging facilities. This study uses a variant of maximum coverage location problem in an MILP to maximize the sum of covered taxi trip counts.

A category of literature has focused on EV user concerns on finding routes with charging facilities to guarantee that sufficient electricity is available for long trips. For instance, Adler et al. \cite{adler2016electric} have considered a feasible set of sub-trips in the network with maximum length that requires full battery to travel. The problem aims to minimize the travel length through a shortest-walk problem, where each EV is constrained to have one charging opportunity in each sub-trip. 
He et al. \cite{he2012optimal} have proposed a distributed scheduling problem that aims to minimize the EVs' charging costs. Each local controller bridges the connection between a group of EVs and the central controller to gather (i) predicted load of the day from the controller and (ii) real-time EV information from local charging stations.
Wang et al. \cite{wang2016path} have developed a distance-constrained traffic assignment model through an iterative linear approximation strategy, where the model is decomposed based on origin-destination pairs and activity sequences to solve the sub-problem in each iteration. They assume the activity sequences capture the EV users' behaviors (e.g., the tendency on charging location, travel range anxiety). 
Similarly, Wen et al. \cite{wen2016modeling} have studied the charging choices for EV drivers using stated preference data, obtained from U.S. battery electric vehicle owners. 
Three logit models are developed based on the information collected in the survey to indicate the EV users' charging choice under pre-defined situations. Besides, the interaction between dwell time, charging price at different locations, charging range, and the distance among charging facilities are derived and the effects of them on the probability of charging are tested. The results indicate an estimation for the willingness to pay the extra charging price for faster charging compared to level 1 charging option.
Besides, Yu and MacKenzie \cite{yu2016modeling} have studied the probability of having EV supply equipment at the end of the trip as well as the interaction of battery SOC and dwell time based on data from 125 pre-production Toyota Prius plug-in hybrid vehicles. 
Moreover, He et al. \cite{he2015deploying} have proposed a bi-level tour-based model, to minimize total costs including travel time and charging duration, considering public charging stations under drivers' risk-taking attributes given BEVs' SOC and limited driving range. An iterative approach is used to tackle the model complexity due to tour enumerations within a genetic algorithm framework that solves for the charging facility deployments. 
He et al. \cite{he2018optimal} have developed a bi-level optimization program to determine the optimal charging facility locations considering the driving range limitation and required charging duration by a path-based equilibrium traffic assignment. Rather than the amount of flow captured by charging facilities, the objective function models the maximum flow that can use charging facilities en-route. The proposed model structure and path-based traffic assignment reduce the computational efficiency. 
Liu and Song \cite{liu2018network} have proposed a non-linear complementarity model to determine the UE traffic assignment in a network with battery swapping stations for BEVs with the consideration of flow-dependent electricity consumption and driving-range limitation. The model is then formulated as a variational inequality, converted into a non-linear problem following the duality approach literature, and solved using a column generation algorithm (similar to \cite{hajibabai2019patrol,mehrabipour2019decomposition}). While the algorithm solves a large-scale UE problem, it assumes that the BEV demand is fixed.
\vspace{-3mm}

\subsection{Bi-level Optimization Strategies}\label{sec:Lit2}
Literature has shown various strategies to represent the inter-relationships between leaders and followers in bi-level optimization problems. A common approach is to find an equilibrium condition in the bi-level optimization program that satisfies the objectives of both levels. 
However, finding an optimal solution to a bi-level program has always been a challenge. A review study by Lu et al. \cite{lu2016multilevel} has revealed that the solution approaches for a linear bi-level decision-making problem can be classified into (i) implementation of vertex enumeration techniques \citep{bialas1984two}, (ii) application of Karush-Kuhn-Tucker (KKT) conditions and transformation of the bi-level optimization program into an equivalent single-level problem \citep{hajibabai2014joint,mirheli2020charging,bardaka2020reimagining}, and (iii) utilization of penalty function approaches \citep{anandalingam1990solution,white1993penalty}. 
For example, Mirheli and Hajibabai \cite{mirheli2019utilization} have formulated an integrated plan for parking utilization management, involving users and agency perspectives, into a bi-level optimization program. The problem is then converted into an equivalent single-level model and solved using a stochastic look-ahead technique, embedded in a Monte Carlo tree search algorithm, via a dynamic programming framework (similar to \cite{mirheli2018development,hajibabai2016dynamic}). 
Besides the existing approaches, literature shows that a number of meta-heuristic approaches have been implemented to solve bi-level optimization programs that offer higher computational efficiency, where their solution quality is not guaranteed.
For instance, Hejazi et al. \cite{hejazi2002linear} have developed a genetic algorithm (GA) to solve a linear bi-level optimization program, where 
the bi-level problem is converted into a single-level optimization problem and the GA is utilized to solve the single-level problem with complementary constraints. Each feasible chromosome represents an edge of the feasible region and simplifies the optimization model 
to considerably reduce the search space; similar to \cite{hajibabai2013integrated}. 
Besides, scenario-based techniques to solve bi-level programs have been proposed in the literature. For instance, the study by Xu and Wang \cite{xu2014exact} has presented an algorithm to solve a linear mixed-integer bi-level program under three simplified cases: finite optimal, infeasible, and unbounded problems.

Very limited studies have attempted to develop global optimization techniques to tackle the non-linearity (or non-convexity) in bi-level optimization programs (either in the lower- or upper-level formulation). 
Various simplifying assumptions have been made in previous studies to solve the bi-level problems under specific conditions (e.g., \cite{bard1988convex,edmunds1991algorithms}). 
For example, Bard \cite{bard1988convex} has assumed all functions (i.e., objective functions and the feasible region in both levels) are convex and proposed a hybrid technique to search for local optimal solutions in an inducible region. Then, a branching scheme has been developed to find the global optimum. 
Jan and Chern \cite{jan1994nonlinear} have studied a non-linear integer bi-level problem, where the upper level is formulated with no constraints and
the objective function and constraints of the lower level are defined as the summation of non-decreasing univariate terms.
Thirwani and Arora \cite{thirwani1997algorithm} have defined a bi-level program with a fractional linear objective function, linear constraints, and integer variables. 
Mersha and Dempe \cite{mersha2011direct} have replaced the bi-level program with an equivalent single-level problem under specific model structure, i.e., convex lower-level problem with strongly stable optimal solutions.
Approximation techniques are also proposed to solve bi-level problems that include non-convex functions. 
For instance, a study by Farvaresh and Sepehri \cite{farvaresh2011single} has approximated the link travel time through the definition of a piece-wise linear function to achieve a convex formulation. 
Besides, Al-Khayyal et al. \cite{al1992global} has replaced the complementary slackness condition by an equivalent system of convex and separable quadratic functions and developed
an integrated branch-and-bound and piece-wise linear approximation technique to find the global minimum.
%
%
Finally, a study by Mitsos et al. \cite{mitsos2008global} has proposed a bounding algorithm to find the global solution to a non-linear bi-level program that includes non-convex functions in both upper- and lower-level problems. A lower bounding problem is defined as a relaxed program including the constraints of lower- and upper-level models as well as a parametric upper bound to the optimal solution of the lower-level problem. Later, Mitsos \cite{mitsos2010global} has proposed an algorithm to find the global solution to non-linear mixed-integer bi-level programs. This study examines the impact of upper-level integer decision variables on the generation of parametric upper bounds to the lower-level problem. 
%
\vspace{-3mm}

\subsection{Summary}\label{sec:Summary}
This paper formulates the integrated network design and utilization management of EV charging facilities while accounting for charging agency and user perspectives. The problem is presented in a hierarchical optimization program that models the charging agency as a single leader in upper level and EV users as followers in the lower level using a non-cooperative game theoretical framework. 
While the literature is rich in the charging station location domain, the proposed integrated framework has not been addressed to the best of our knowledge.
The problem involves integer decision variables and non-linear terms in both upper- and lower-level formulations. Thus, an iterative technique is implemented to solve the problem to system-level optimality that generates theoretical lower and upper bounds to the proposed bi-level model \cite{mitsos2010global}.
\vspace{-3mm}

\section{Model Formulation}\label{sec:modelformulation}
This section introduces a bi-level mathematical program, where the upper level aims to minimize the one-time charger deployment expenses and maximize the charging network operator's revenue through effective utilization of suggested facilities. A demand-responsive dynamic pricing policy is incorporated into the upper-level model that is restricted by each facility's capacity constraints. Besides, a queuing theory driven model is embedded to gauge the level of service of each facility considering the average waiting time to find an available charging spot. On the other hand, the lower-level formulation aims to minimize the EV users' travel and charging expenses. The users' travel behavior is formulated as an equilibrium traffic assignment, where charger locations help identify the feasible EV paths and aggregated arc trips fulfill the origin-destination demands. 
We assume that EVs start their travels with a sufficient initial SOC to get to charging facilities and that EVs will leave charging facilities with enough SOC to get to their final destinations. Additionally, the utilization metric is defined by the charging duration at each facility. 
%
%

The planning horizon is defined by $\Gamma=\left\{0,1,\cdots,T-1\right\}$, where $T$ represents the number of discrete time steps at which we make the charging decisions.
Let $G(N,A)$ represent the transportation network with the set of all nodes $N$ and arcs $A$.
We define the set of inbound/outbound arcs to/from node $i\in N$ by $A_{i}^{-},\,A_{i}^{+} \subset A$. 
The decision variable $\eta_i \in \mathbb{Z}$ represents the physical capacity of a charging facility deployed at node $i$.
We denote the charging facility installation at node $i \in N$ by decision variable $y_i=\{0,1\}$. 
Besides, the charging pricing scheme of facility $i$ at each time $t$ is shown by decision variable $p_i^t$. 
The occupancy of charging facility at node $i\in N$ over time $t\in \Gamma$ is captured by state variable $f_i^t$. 
Hence, the available capacity $\hat{\eta}^t_i$ of facility at $i$ at time $t$ is calculated by subtracting the existing occupancy prior to time $t$ at node $i$ (i.e., $f^{t-1}_i$) from the physical capacity $\eta_i$, i.e., $\hat{\eta}^t_i=\eta_i-f^{t-1}_i$.
%

To ensure effective utilization of all charging facilities and avoid long queues in finding available charging spots (particularly, in high demand cases), a queuing theory concept is applied. 
We define function $\zeta$ to represent the probability of finding a vacant charger at a facility installed at node $i$ within a $\nu$ time window, i.e., $\zeta(\phi(f_{i}^{t},\eta_{i}) \le \nu)^t_i$, to measure the waiting time spent in finding available charging spots. Besides, the occupied facilities, e.g., in more popular areas, tend to be over-utilized more often. 
Hence, it is beneficial, for the arriving users, to capture the impact of usable capacity on each facility's expected service level. Therefore, function $\phi$ represents the relationship between physical capacity $\eta_i$ and occupancy $f^t_i$ of facility at node $i$ at time $t$. 
Similar to Xie et al. \cite{xie2018long}, we have utilized the Erlang C formula of queuing theory to characterize the waiting time since each facility with $\eta_i$ chargers can serve a queue of EV users with an expected service time $\theta$ and user arrival rate $\xi_{i}^{t}$ at time $t$. Hence, a non-linear relationship between $\eta_{i}$ and $f^t_i$ is introduced by
\begin{align}
	&\zeta(\phi(f_{i}^{t},\eta_{i}) \le \nu)^t_i = 1-\frac{(\xi_{i}^{t}\theta)^{\eta_{i}}}{\eta_{i}!}\Big(\frac{(\xi_{i}^{t}\theta)^{\eta_{i}}}{\eta_{i}!}+\nonumber\\
	&\epc \epc \epc \epc (1-\frac{\xi_{i}^{t}\theta}{\eta_{i}})\,\sum_{q=0}^{\eta_{i}-1}\frac{(\xi_{i}^{t}\theta)^q}{q!}\Big)^{-1}\, e^{-(\eta_{i}-\xi_{i}^{t}\theta){\nu}{\theta}^{-1}},\nonumber\\
	&\epc \epc \epc \epc \epc \epc \epc \epc \epc \epc \epc \epc \forall i \in N,\, t \in \Gamma.\,\label{eqref:waiting1}
\end{align}

We define the sets of travel origins and destinations by $O$ and $D$, respectively. 
Total elastic demand of EV users $\lambda_{od}^{t}$, follows an inverse demand function $H$ of equilibrium disutility $\sigma^{t}_{od}$ of users travelling on $od \in OD$ at time $t$, i.e.,
%
%
\begin{equation}
\lambda_{od}^{t} = H(\sigma_{od}^{t}) = g_{od}^{t} - b\,\sigma_{od}^{t},\,\forall od \in OD,\,t \in \Gamma,\,\label{eqref:ULDisutility1}
\end{equation}
where $H$ is defined as a linear function, $g_{od}^{t}$ is the intercept of EV demand curve, and $b$ is the demand elasticity coefficient. 
In other words, $\sigma^{t}_{od}$ represents the minimum cost imposed to EV users due to (1) charging price $p_i^t$ at charging facility $i \in N$ and time $t$ and (2) travel time from origin $o \in O$ to charging facility $i$ and beyond (i.e., final destination $d\in D$).
The EV traffic flow on arc $a=(i,j)\in A$ from $o\in O$ to $d\in D$, that visits charging facility installed at node $i \in N$ at time $t$, is represented by $x_{a}^{t,od,i}$.
The UE traffic flow of arc $a$ on $od \in OD$ at time $t$ is defined by the non-negative decision variable $z_{a}^{t,od}$, where $\sum_{a \in A_i^{+}} z_{a}^{t,od} = \sum_{a \in A_i^{-}} z_{a}^{t,od}$, unless node $i$ is either the origin or destination of user demand, i.e.,
\begin{align}
&\sum_{a \in A_i^{+}} z_{a}^{t,od} - \sum_{a \in A_i^{-}} z_{a}^{t,od} = \begin{cases}
		\lambda_{od}^{t}, & \forall i \in O \\[2pt]
		-\lambda_{od}^{t}, & \forall i \in D\\[2pt]
		0, & \text{O.W.}
		\end{cases}\nonumber\\
		&\epc \epc \epc \epc \epc \epc \epc \epc \epc \epc \forall od \in OD,\,t \in \Gamma,\,\label{eqref:LLCons2d}
\end{align}
which represents the conservation of EV user flow at node $i$.
Besides, the aggregated EV flow on arc $a$ at time $t$ is represented by $v_{a}^{t}$, as follows.
\begin{equation}
v_{a}^{t} = \sum_{od \in OD} z_{a}^{t,od},\,\forall a \in A,\, t \in \Gamma.\,\label{eq:LLCons2b}
\end{equation}
Travel time on arc $a\in A$ follows an increasing function $R_{a}^{t}$ of aggregated EV flow $v_a^t$, captured by Bureau of Public Roads' performance function \citep{Bureau1970}, represented by
\begin{equation}
R_{a}^{t}(v_{a}^{t}) = \hat{R}_{a}^{t}\big[1+w\left(\dfrac{v_{a}^{t}}{c_{a}}\right)^{q}\big], \label{eq:BPR}
\end{equation}
where $\hat{R}_{a}^{t}$ represents the free-flow travel time on arc $a$ at time $t$. In addition, $w$ and $q$ are the BPR function parameters, and $c_{a}$ is the traffic capacity of arc $a$. 

As EVs' driving range limit influences their route plan, we define feasible paths where EV users travel within their maximum driving range unless there is at least a charging facility en-route. We introduce variable $e_{a}^{t,od}\in\{0,1\}$ to identify arcs $a \in A$ on the feasible paths; i.e., $e_{a}^{t,od}$ is 1 if arc $a \in A$ is on a feasible path for EVs on $od \in OD$ at $t \in \Gamma$, or 0 otherwise.
We represent the range constraints (similar to \cite{zheng2017traffic}) at node $i$ on a path with $od \in OD$ with auxiliary variables $u_i^{od},\,u'^{od}_i>0$. Variable $u_i^{od}$ represents the maximum distance traveled from the last visited charging facility located at node $i \in N$ on a feasible path with $od \in OD$. In addition, $u'^{od}_i$ is defined as a dummy variable that is zero at facilities.
\begin{subequations}
	\begin{align}
	&
	z_{a}^{t,od} \le M\,e_{a}^{t,od},\,\forall a \in A,\, od \in OD,\, t \in \Gamma,\,\label{eqref:LLbinary}\\
	&
	u_{j}^{od} \ge u'^{od}_{i} + \delta_{a} -M\,(1-e_{a}^{t,od}),\nonumber\\
	&\epc \epc \epc \epc \epc \forall a=(i,j) \in A,\, od \in OD,\, t \in \Gamma,\, \label{eqref:rangeAnxiety1}\\
	&
	u_{i}^{od} \le \Delta,\,\forall i \in N,\,od \in OD,\, \label{eqref:rangeAnxiety2}\\
	&
	u'^{od}_{i} \ge u_{i}^{od} - M\,y_{i},\,\forall i \in N,\,od \in OD,\, \label{eqref:rangeAnxiety3}\\
	&
	u'^{od}_{i} \le u_{i}^{od} + M\,y_{i},\,\forall i \in N,\,od \in OD,\, \label{eqref:rangeAnxiety4}\\
	&
	u'^{od}_{i} \le M\,(1- y_{i}),\,\forall i \in N,\,od \in OD,\, \label{eqref:rangeAnxiety5}\\
	&
	u_{i}^{od} \ge 0,\,u'^{od}_{i} \ge 0,\,\forall i \in N,\,od \in OD,\, \label{eqref:rangeAnxiety6}
	\end{align}
\end{subequations}
where $\delta_a > 0$ represents the length of arc $a \in A$ and $\Delta$ denotes the driving range limit.
Constraints \eqref{eqref:LLbinary} ensure that EV flow selects arcs located on the feasible path. Constraints \eqref{eqref:rangeAnxiety1} update the auxiliary variable $u_j^{od}$ for all $j \in N$ and $od \in OD$ based on the traveled distance $d_a$ on the feasible path. Constraints \eqref{eqref:rangeAnxiety2} guarantees that EV users do not violate the driving range limit. Constraints \eqref{eqref:rangeAnxiety3} and \eqref{eqref:rangeAnxiety4} indicate that $u_i^{od} = u'^{od}_i$ when there is no charging facility at node $i$. 
Constraints \eqref{eqref:rangeAnxiety5} enforce $u'^{od}_i = 0$ when there is a charging facility at node $i$, where $M$ represents a large positive constant.
Finally, Constraints \eqref{eqref:rangeAnxiety6} represent that the value of auxiliary variable $u_i^{od},\, u'^{od}_i$ should be non-negative. 
The bi-level program is defined by
\begin{subequations}
	\begin{align}
		&\displaystyle{\operatornamewithlimits{\mbox{min}}_{\boldsymbol{y},\boldsymbol{\eta},\boldsymbol{p}}} \,\,\sum_{i \in N} \big(\eta_{i}\,C_{i}-\alpha\,\sum_{t \in \Gamma}\sum_{a\in A} \sum_{od \in OD} p_{i}^t\, x_{a}^{t,od,i} \big) , \label{eqref:UpperLevelObj}\\[2pt] 
		& \mbox{subject to} \,\,\,\,\,\,\, \eqref{eqref:waiting1}-\eqref{eqref:ULDisutility1}\,\,\, \mbox{and} \nonumber\\
		& l_i\,y_{i} \le p_{i}^{t} \le u_i\,y_{i},\,\forall i \in N,\,t \in \Gamma,\,\label{eqref:ULPriceRange1}\\[2pt]
		& f_{i}^{t} = f_{i}^{t-1} - \sum_{od \in OD}\sum_{a \in A_{i}^{+}} x_{a}^{t,od,i} + \sum_{od \in OD}\sum_{a \in A_{i}^{-}} x_{a}^{t,od,i}\,\nonumber\\
		&\epc \epc \epc \epc \epc \epc \epc \epc \epc \epc \epc \forall i \in N,\,t \in \Gamma,\,\label{eqref:ULOccupancyConsv1}\\[2pt]
		&  \sum_{od \in OD}\sum_{a \in A_{i}^{-}} x_{a}^{t,od,i} \le \hat{\eta}^t_{i},\,\forall i \in N,\,t \in \Gamma \setminus\{0\},\,\label{eqref:ULAllowedUsers1}\\[2pt]
		&  f_{i}^{t} \le \eta_{i},\,\forall i \in N,\,t \in \Gamma,\,\label{eqref:ULOccupancyLim1}\\[2pt]
		& \sum_{i \in N} \eta_{i}\,C_{i} \le B,\,\label{eqref:Budget1}\\[2pt]
		& \eta_{i} \le M\,y_{i},\,\forall i \in N,\,\label{eqref:ULCapacityLimit1}\\[2pt]
		& \eta_{i} \le \eta_{max},\,\forall i \in N,\,\label{eqref:StrategicCharger1}\\[2pt]
		& \zeta(\phi(f_{i}^{t},\eta_{i}) \le \nu)_i^t \ge \kappa,\,\forall i \in N,\, t \in \Gamma,\,\label{eqref:waiting}\\
		& \mbox{and} \epc \boldsymbol{x},\boldsymbol{z},\boldsymbol{v} \in \nonumber\\
		&\displaystyle{\operatornamewithlimits{\mbox{min}}_{\boldsymbol{x},\boldsymbol{z},\boldsymbol{v}}} \sum_{a \in A} \left(\int_{0}^{v_{a}^{t}} R_{a}^{t}(\omega) d\omega+ \gamma \sum_{i \in N}\sum_{od \in OD} p_{i}^{t}\,x_{a}^{t,od,i}\right) \label{eqref:LowerLevelObj} \\[2pt]
		& \mbox{subject to}\,\,\,\,\eqref{eqref:LLCons2d}-\eqref{eq:BPR},\,\eqref{eqref:LLbinary}-\eqref{eqref:rangeAnxiety6},\,\,\,\, \mbox{and} \nonumber \\[2pt]
		& x_{a}^{t,od,i} \le z_{a}^{t,od}, \forall  a=(i,j) \in A, od \in OD, t \in \Gamma,\,\label{eqref:LLCons2c}\\[2pt]
		& x_{a}^{t,od,i} \ge 0, z_{a}^{t,od} \ge 0, \forall a\in A, i\in N, od \in OD, t \in \Gamma, \label{eqref:LLNonNeg}\\[2pt]
		& v_{a}^{t} \ge 0,\, \forall a \in A,\,t \in \Gamma,\,\label{eqref:LLNonNeg1}	
	\end{align}
\end{subequations}
\noindent where $C_i$ is the unit charger installation cost at node $i \in N$ and $\alpha$ is a positive integer that represents the operation periods of each charging facility, e.g., number of operation days in a year.
The upper-level objective function \eqref{eqref:UpperLevelObj} aims to minimize the cost imposed by facility installation and maximize the revenue generated from charging collections. 
Constraints \eqref{eqref:ULPriceRange1} enforce a minimum and maximum charging price at time $t$ for an available charging facility at node $i$. 
Constraints \eqref{eqref:ULOccupancyConsv1} define the occupancy of facility at node $i$ at time $t$ considering its inbound and outbound EV flow.
Constraints \eqref{eqref:ULAllowedUsers1} ensure that EV traffic flow at time $t$ does not exceed the available capacity $\hat{\eta}^t_i$.
Constraints \eqref{eqref:ULOccupancyLim1} show that the occupancy of each facility at node $i$ at time $t$ cannot exceed its physical capacity.
Furthermore, charging facility deployment can be subject to budget constraints, as enforced in \eqref{eqref:Budget1}, where $B$ denotes the budget.
Constraints \eqref{eqref:ULCapacityLimit1} show that chargers can only be installed in an open charging facility at node $i$, where $M$ is a large positive value.
Constraints \eqref{eqref:StrategicCharger1} enforce a maximum physical capacity $\eta_{max}$ for each charging facility installed at node $i$.
Besides, constraints \eqref{eqref:waiting} enforce a lower bound $\kappa$ for the probability of finding an available charger in a facility installed at node $i$ at $t$.

The lower-level model \eqref{eqref:LowerLevelObj} aims to minimize EV users' travel times and charging expenses, where $\gamma$ denotes the monetary value of time.
Constraints \eqref{eqref:LLCons2c} ensure that EV charging demand at node $i$ does not exceed the flow through arc $a=(i,j)$ at time $t$. 
Finally, constraints \eqref{eqref:LLNonNeg}-\eqref{eqref:LLNonNeg1} represent the non-negativity of traffic flows.
\vspace{-2mm}

\section{Solution Technique}\label{sec:Algorithm}
The problem \eqref{eqref:waiting1}-\eqref{eq:BPR},\,\eqref{eqref:LLbinary}-\eqref{eqref:rangeAnxiety6},\,and \eqref{eqref:UpperLevelObj}-\eqref{eqref:LLNonNeg1} is a non-convex bi-level optimization program with mixed-integer decision variables and non-linear terms in both upper- and lower-level formulations.
We first introduce a linear function to capture the relationship between each facility's maximum occupancy and physical capacity with relatively high probability of finding an available charger within a reasonable time window, as follows.
%
%
%
\begin{equation}
	f_{i}^{t} \le \Omega_{i}\, \eta_{i},\,\forall i \in N,\, t \in \Gamma, \, \label{eqref:waitingApprox}
\end{equation}
where $\Omega_{i}$ represents the slope of the linear function. The proposed approximation provides an upper bound on the occupancy of facility located at node $i$.

The non-convexity of lower-level models introduces additional complexity to the bi-level problems. 
Hence, we apply a solution technique, developed by Mitsos \cite{mitsos2010global}, to generate theoretical lower and upper bounds and add cuts to the lower bounding procedure until the upper bound procedure generates an $\epsilon$-optimal point. We first introduce the underlying definitions and assumptions, and then describe the iterative exact algorithm to solve the proposed bi-level optimization program.
\vspace{-2mm}

\subsection{Definitions and Assumptions}\label{ExactMethod}
\noindent For notation simplicity, we let $F^{u}$ and $F^{l}$ represent the objective function of upper- and lower-level problems, respectively.
We first define the host sets, parametric optimal solution function, and candidate upper-level points as follows. The host set of all upper-level variables $(\boldsymbol{y},\boldsymbol{\eta},\boldsymbol{p})$ is defined as $U \equiv (\{0, 1\} \times \{0, 1, \dots, \eta_{max}\} \times [\operatornamewithlimits{min}_{i \in N}\, l_i, \dots, \operatornamewithlimits{max}_{i \in N}\, u_i])$. 
Similarly, we define the host set of lower-level variables $(\boldsymbol{x},\boldsymbol{z},\boldsymbol{v})$ as $L \equiv ([0, \dots, x_{max}] \times [0, \dots, z_{max}] \times [0, \dots, v_{max}])$. 
Let $F^{l*}(\boldsymbol{y},\boldsymbol{\eta},\boldsymbol{p})$ denote the parametric optimal value of the lower-level problem as a function of upper-level variables. We let $F^{l*}(\boldsymbol{y},\boldsymbol{\eta},\boldsymbol{p})=+\infty$ if no feasible solution is found to the lower-level problem.
We define $U^{\infty} \subset U$ that is applicable to both upper- and lower-level problems, where
\begin{align}
	&U^{\infty} = \big\{ (\boldsymbol{y},\boldsymbol{\eta},\boldsymbol{p}) \in U : \exists\, (\boldsymbol{x},\boldsymbol{z},\boldsymbol{v}) \in L :
	\eqref{eqref:ULDisutility1}-\eqref{eq:BPR},\,\nonumber\\
	&\,\,\, \eqref{eqref:LLbinary}-\eqref{eqref:rangeAnxiety6},\,\eqref{eqref:ULPriceRange1}-\eqref{eqref:StrategicCharger1},\,\eqref{eqref:LLCons2c}-\eqref{eqref:LLNonNeg1},\,\mbox{and}\, \eqref{eqref:waitingApprox}
\big\}.
\end{align}
Similarly, 
\begin{align}
&
U^{\psi}(\bar{F}^{u}) = 
\big\{ (\boldsymbol{y},\boldsymbol{\eta},\boldsymbol{p}) \in U : \exists (\boldsymbol{x},\boldsymbol{z},\boldsymbol{v}) \in L :\eqref{eqref:ULDisutility1}-\eqref{eq:BPR},\nonumber\\
&\epc \epc \epc \epc
 \eqref{eqref:LLbinary}-\eqref{eqref:rangeAnxiety6},\,\eqref{eqref:ULPriceRange1}-\eqref{eqref:StrategicCharger1}, \eqref{eqref:LLCons2c}-\eqref{eqref:LLNonNeg1},\nonumber \\
&\epc \epc \epc \epc \mbox{and}\, \eqref{eqref:waitingApprox}, 
\,F^{u}(\boldsymbol{y},\boldsymbol{\eta},\boldsymbol{p},\boldsymbol{x},\boldsymbol{z},\boldsymbol{v}) \le \bar{F}^{u} \big\},
\end{align}
where $\bar{F}^{u} \in \mathbb{R}$ represents an upper bound for the upper-level objective function $F^{u}$.

The following assumptions are made to ensure convergence of the algorithm in solving the proposed problem. We first assume that all variables should be defined with explicit bounds. The assumption will be satisfied by defining variables $(\boldsymbol{y},\boldsymbol{\eta},\boldsymbol{p},\boldsymbol{x},\boldsymbol{z},\boldsymbol{v})$. Second, all functions defined in the proposed bi-level program
\eqref{eqref:ULDisutility1}-\eqref{eq:BPR},\,\eqref{eqref:LLbinary}-\eqref{eqref:rangeAnxiety6},\,\eqref{eqref:UpperLevelObj}-\eqref{eqref:LLNonNeg1},\,\mbox{and}\, \eqref{eqref:waitingApprox} 
are assumed to be continuous for the given values of upper-level integer variables $(\boldsymbol{y},\boldsymbol{\eta}) \in \{0, 1\} \times \{0, 1, \dots, \eta_{max}\}$. By the continuity of the proposed constraints \eqref{eqref:ULDisutility1}-\eqref{eq:BPR},\,\eqref{eqref:LLbinary}-\eqref{eqref:rangeAnxiety6},\,\eqref{eqref:ULPriceRange1}-\eqref{eqref:StrategicCharger1},\,\eqref{eqref:LLCons2c}-\eqref{eqref:LLNonNeg1},\,\mbox{and}\, \eqref{eqref:waitingApprox} and the compact host sets $\{U,L\}$, we do not observe any non-continuity in the proposed constraints for known values of integer variables.
Third, for each vector of fixed upper-level values $(\bar{\boldsymbol{y}},\bar{\boldsymbol{\eta}},\bar{\boldsymbol{p}}) \in U^{\infty}$, there is a lower-level vector $(\tilde{\boldsymbol{x}}, \tilde{\boldsymbol{z}}, \tilde{\boldsymbol{v}}) \in L,\, \forall \varepsilon_{F1}^{l} \ge 0$, such that:
\begin{subequations}
	\begin{align}
	& 
	\eqref{eqref:LLCons2d}-\eqref{eq:BPR},\,\eqref{eqref:LLbinary}-\eqref{eqref:rangeAnxiety2},\,\eqref{eqref:rangeAnxiety6} ,\,\eqref{eqref:LLCons2c}-\eqref{eqref:LLNonNeg1},\,\mbox{and}\nonumber\\[2pt]
	&
	\tilde{u}_{i}^{od} - M\,\bar{y}_{i} - \tilde{u'}^{od}_{i} < 0,\,\forall i \in N,\,od \in OD,\, \label{eq:assumption1}\\[2pt]
	&
	\tilde{u'}^{od}_{i} - \tilde{u}_{i}^{od} + M\,\bar{y}_{i} < 0 ,\,\forall i \in N,\,od \in OD,\, \label{eq:assumption2}\\[2pt]
	&
	\tilde{u'}^{od}_{i} - M\,(1- \bar{y}_{i}) < 0,\,\forall i \in N,\,od \in OD,\, \label{eq:assumption3}\\[2pt]
	&
	\sum_{a \in A} \left(\int_{0}^{\tilde{v}_{a}^{t}} R_{a}^{t}(\omega)\,d\omega+ \gamma\,\sum_{i \in N}\sum_{od \in OD} \bar{p}_{i}^{t}\,\tilde{x}_{a}^{t,od,i}\right)\nonumber\\
	&\epc \epc \epc \epc \epc \epc \epc \le F^{l*}(\bar{\boldsymbol{y}}, \bar{\boldsymbol{\eta}}, \bar{\boldsymbol{p}}) + \varepsilon_{F1}^{l},\label{eq:assumption4}
	\end{align}
\end{subequations}
where $\varepsilon_{F1}^{l}$ represents the maximum interval for the parametric optimal solution of the lower-level program $F^{l*}(\bar{\boldsymbol{y}}, \bar{\boldsymbol{\eta}}, \bar{\boldsymbol{p}})$, to avoid infeasibility. 
Given values of upper-level decision variables $(\boldsymbol{y},\boldsymbol{\eta},\boldsymbol{p})$ help sketch the feasible network (satisfying \eqref{eqref:LLbinary}-\eqref{eqref:rangeAnxiety6} equations) that determines lower-level decision variables $(\boldsymbol{x},\boldsymbol{z},\boldsymbol{v})$. The lower-level problem defines an equilibrium traffic assignment for EVs driving on feasible paths and the solution always exists as there will be a feasible path for each $od \in OD$. Therefore, the lower bounding procedure finds feasible solutions based on the recently added cuts, declared by \eqref{eq:relaxedCons}. We will later see in Proposition \ref{Optimality} that the algorithm will lead to convergence given a feasible solution at each iteration.
%
\vspace{-2mm}

\subsection{The Global Optimization}\label{algorithm}
\noindent In this section, we generate theoretical lower and upper bounds to the original bi-level program \eqref{eqref:ULDisutility1}-\eqref{eq:BPR},\,\eqref{eqref:LLbinary}-\eqref{eqref:rangeAnxiety6},\,\eqref{eqref:UpperLevelObj}-\eqref{eqref:LLNonNeg1},\,and\, \eqref{eqref:waitingApprox} through an iterative algorithm to solve it to exact optimality. We first start the solution technique with a lower bounding procedure.
The proposed bi-level problem can be re-written based on the definitions introduced in Section \ref{ExactMethod} \citep{mitsos2010global}, as follows.
%
\begin{subequations}
	\label{eq:exact}
	\begin{align}
		&F^{u*}\,=\,\nonumber\\
		&\operatornamewithlimits{min}_{\boldsymbol{y},\boldsymbol{\eta},\boldsymbol{p},\boldsymbol{x},\boldsymbol{z},\boldsymbol{v}} \sum_{i \in N} \big(\eta_{i}\,C_{i}-\alpha \sum_{t \in \Gamma}\sum_{a\in A} \sum_{od \in OD} p_{i}^t x_{a}^{t,od,i}\big), \label{eqref:genral1}\\[2pt] 
		&\mbox{subject to} 
		\epc \eqref{eqref:ULDisutility1}-\eqref{eq:BPR},\,\eqref{eqref:LLbinary}-\eqref{eqref:rangeAnxiety6},\,\eqref{eqref:ULPriceRange1}-\eqref{eqref:StrategicCharger1},\nonumber\\
		&\epc \epc \epc \epc \,\, \eqref{eqref:LLCons2c}-\eqref{eqref:LLNonNeg1},\,\eqref{eqref:waitingApprox},\,\mbox{and}\nonumber\\
		& \epc \sum_{a \in A} \left(\int_{0}^{v_{a}^{t}} R_{a}^{t}(\omega)\,d\omega+ \gamma\,\sum_{i \in N}\sum_{od \in OD} p_{i}^{t}\,x_{a}^{t,od,i}\right)\,\nonumber\\
		&\epc \epc \epc \epc \epc \epc \epc \epc \epc \epc  \le\,F^{l*}(\boldsymbol{y},\boldsymbol{\eta},\boldsymbol{p}),\,\label{eqref:genral2}\\[2pt]
		& \epc \boldsymbol{y} \in \{0, 1\},\, \boldsymbol{\eta} \in \{0, 1, \dots, \eta_{max}\},\,\nonumber\\
		& \epc  \epc  \epc \boldsymbol{p} \in [\operatornamewithlimits{min}_{i \in N}\, l_i, \dots, \operatornamewithlimits{max}_{i \in N}\, u_i],\,	\boldsymbol{x} \in [0, \dots, x_{max}],\,\nonumber\\
		& \epc \epc  \epc \boldsymbol{z} \in [0, \dots, z_{max}],\,\boldsymbol{v} \in [0, \dots, v_{max}], \label{eqref:genral3}
	\end{align}
\end{subequations}
where $F^{u*}$ represents the optimal objective value of the bi-level problem.
We replace the right-hand side of \eqref{eqref:genral2} with the parametric upper bound of the lower-level problem and solve problem \eqref{eqref:ULDisutility1}-\eqref{eq:BPR},\,\eqref{eqref:LLbinary}-\eqref{eqref:rangeAnxiety6},\,\eqref{eqref:UpperLevelObj}-\eqref{eqref:LLNonNeg1},\,\eqref{eqref:waitingApprox},\,\mbox{and} \eqref{eqref:genral1}-\eqref{eqref:genral3} by adding cuts iteratively.
%
Each cut includes sets $U^{k}\,\subset\,U$ and points $(\boldsymbol{x}^k,\boldsymbol{z}^k,\boldsymbol{v}^k)\,\in\,L,\,\forall k\,\in\,K$, where $K$ is an index set that represents the collection of obtained cuts. 
In other words, we add a new cut to the problem in each iteration $k$ utilizing the defined subsets $U^{k-1}$ and points $(\boldsymbol{x}^{k-1},\boldsymbol{z}^{k-1},\boldsymbol{v}^{k-1})$ obtained in the previous iteration $k-1$.
%
Therefore, the problem can be relaxed, i.e., $\forall k\in K$, 
\begin{subequations}
	\label{eq:relaxed}
	\begin{align}
		& \mbox{LBD}\,=\,\nonumber\\
		&\operatornamewithlimits{min}_{\boldsymbol{y},\boldsymbol{\eta},\boldsymbol{p},\boldsymbol{x},\boldsymbol{z},\boldsymbol{v}} \,\,\sum_{i \in N} \big(\eta_{i}\,C_{i}-\alpha\,\sum_{t \in \Gamma}\sum_{a\in A} \sum_{od \in OD} p_{i}^t\, x_{a}^{t,od,i} \big),\,\label{eq:exactObj1}\\[2pt]
		&\mbox{subject to} \epc
		\eqref{eqref:ULDisutility1}-\eqref{eq:BPR},\,\eqref{eqref:LLbinary}-\eqref{eqref:rangeAnxiety6},\,\eqref{eqref:ULPriceRange1}-\eqref{eqref:StrategicCharger1},\nonumber\\
		&\epc \epc \epc \epc \,\,\eqref{eqref:LLCons2c}-\eqref{eqref:LLNonNeg1},\,\eqref{eqref:waitingApprox},\,\eqref{eqref:genral3},\,\mbox{and}\nonumber\\
		&
		\epc \boldsymbol{y},\boldsymbol{\eta},\boldsymbol{p} \in U^{k}\, \Rightarrow \, \nonumber\\
		&\epc\sum_{a \in A} \left(\int_{0}^{v_{a}^{t}} R_{a}^{t}(\omega)\,d\omega+ \gamma\,\sum_{i \in N}\sum_{od \in OD} p_{i}^{t}\,x_{a}^{t,od,i}\right)\,\nonumber\\
		&\epc \epc \epc \le\,F^{l}(\boldsymbol{y},\boldsymbol{\eta},\boldsymbol{p},\boldsymbol{x}^k,\boldsymbol{z}^k,\boldsymbol{v}^k),\,\forall k \in K,\,\label{eq:relaxedCons}
	\end{align}
\end{subequations}
where LBD represents the lower bound and is obtained from \eqref{eq:exactObj1}.
Iterative generation of sets $U^{k}$ and points $(\boldsymbol{x}^k,\boldsymbol{z}^k,\boldsymbol{v}^k)$ can ensure convergence of the lower bound through the following steps. 
First, the lower-level problem
\eqref{eqref:LLCons2d}-\eqref{eq:BPR},\,\eqref{eqref:LLbinary}-\eqref{eqref:rangeAnxiety6},\,\,and \eqref{eqref:LowerLevelObj}-\eqref{eqref:LLNonNeg1} is solved to global optimality given 
the value of upper-level decision variables $(\bar{\boldsymbol{y}},\bar{\boldsymbol{\eta}},\bar{\boldsymbol{p}})$, i.e., 
\begin{subequations}
	\label{eq:globalLL}
	\begin{align}
		& \bar{F}^{l*}\,=\,\nonumber\\
		&\operatornamewithlimits{min}_{\boldsymbol{x},\boldsymbol{z},\boldsymbol{v}} \sum_{a \in A} \left(\int_{0}^{v_{a}^{t}} R_{a}^{t}(\omega)\,d\omega+ \gamma\,\sum_{i \in N}\sum_{od \in OD} \bar{p}_{i}^{t}\,x_{a}^{t,od,i}\right),\,\label{eqref:lowerLevelglobal1}\\[2pt]
		&\mbox{subject to}\,\,
    	\eqref{eqref:LLCons2d}-\eqref{eq:BPR},\,\eqref{eqref:LLbinary}-\eqref{eqref:rangeAnxiety2},\,\eqref{eqref:rangeAnxiety6} ,
    	\eqref{eqref:LLCons2c}-\eqref{eqref:LLNonNeg1},\,\mbox{and}\nonumber\\[2pt]
		&
		u'^{od}_{i} \ge u_{i}^{od} - M\,\bar{y}_{i},\,\forall i \in N,\,od \in OD,\, \label{eqref:lowerLevelglobal2}\\[2pt]
		&
		u'^{od}_{i} \le u_{i}^{od} + M\,\bar{y}_{i},\,\forall i \in N,\,od \in OD,\, \label{eqref:lowerLevelglobal3}\\[2pt]
		&
		u'^{od}_{i} \le M\,(1- \bar{y}_{i}),\,\forall i \in N,\,od \in OD,\, \label{eqref:lowerLevelglobal4}\\[2pt]
		&
		\boldsymbol{x} \in [0, \dots, x_{max}],\,\boldsymbol{z} \in [0, \dots, z_{max}],\,\nonumber\\
		&\boldsymbol{v} \in [0, \dots, v_{max}], \label{eqref:lowerLevelglobal5}
	\end{align}
\end{subequations}
where $\bar{p}_{i}^{t}$ and $\bar{y}_{i}^{t}$ represent the fixed values of 
$p_{i}^{t}$ and $y_{i},\,\forall i \in N$, respectively.
The second step is to find a triple of lower-level variables $(\boldsymbol{x}^k,\boldsymbol{z}^k,\boldsymbol{v}^k) \in L$ 
given the optimal value of upper-level variables $(\bar{\boldsymbol{y}},\bar{\boldsymbol{\eta}},\bar{\boldsymbol{p}})$ $,\,\forall\,\varepsilon_{F2}^{l} \ge 0$, as follows.
\begin{subequations}
	\label{eq:secondStep}
	\begin{align}
		& \Upsilon^{*}\,=\,\operatornamewithlimits{min}_{\boldsymbol{x},\boldsymbol{z},\boldsymbol{v}} \Upsilon\,\label{eq:secondStepObj}\\
		&\mbox{subject to} \epc 
		\eqref{eqref:LLCons2d}-\eqref{eq:BPR},\eqref{eqref:LLbinary}-\eqref{eqref:rangeAnxiety2},\eqref{eqref:rangeAnxiety6} ,\nonumber\\
		&\epc \epc \epc \epc \,\,
		\eqref{eqref:LLCons2c}-\eqref{eqref:LLNonNeg1},\,\eqref{eqref:lowerLevelglobal5},\,\mbox{and}\nonumber\\
		&
		\sum_{a \in A} \left(\int_{0}^{v_{a}^{t}} R_{a}^{t}(\omega)\,d\omega+ \gamma\,\sum_{i \in N}\sum_{od \in OD} \bar{p}_{i}^{t}\,x_{a}^{t,od,i}\right) \nonumber\\
		& \epc \epc \epc \epc \epc \epc \epc \epc \epc \epc \epc \le \bar{F}^{l*} + \varepsilon_{F2}^{l},\,\label{eq:secondStepCons1}\\
		&
		u_{i}^{od} - M\,\bar{y}_{i} - u'^{od}_{i} \le \Upsilon,\,\forall i \in N,\,od \in OD,\, \label{eq:secondStepCons2}\\
		&
		u'^{od}_{i} - u_{i}^{od} - M\,\bar{y}_{i} \le \Upsilon ,\,\forall i \in N,\,od \in OD,\, \label{eq:secondStepCons3}\\
		&
		u'^{od}_{i} - M\,(1- \bar{y}_{i}) \le \Upsilon,\,\forall i \in N,\,od \in OD,\, \label{eq:secondStepCons4}
	\end{align}
\end{subequations}
where $\Upsilon$ represents an auxiliary variable. 
The final step is to find subsets $U^k$ using the bounds of upper-level decision variables $(\boldsymbol{y},\boldsymbol{\eta},\boldsymbol{p})$ that satisfy the constraints of the lower-level problem \eqref{eqref:LLCons2d}-\eqref{eq:BPR},\,\eqref{eqref:LLbinary}-\eqref{eqref:rangeAnxiety6},\, and \eqref{eqref:LLCons2c}-\eqref{eqref:LLNonNeg1}.
Therefore, we implement a sub-routine applied by Oluwole et al. \cite{oluwole2006rigorous} and Mitsos \cite{mitsos2010global}.
For a given point $(\bar{\boldsymbol{y}},\bar{\boldsymbol{\eta}},\bar{\boldsymbol{p}})$, a point $(\bar{\boldsymbol{x}}^k, \bar{\boldsymbol{z}}^k, \bar{\boldsymbol{v}}^k)$ and $U$, 
we calculate the bounds of box $U^{k} = \big[\bar{\boldsymbol{y}}^{k,l},\,\bar{\boldsymbol{y}}^{k,u}\big] \times \big[\bar{\boldsymbol{\eta}}^{k,l},\,\bar{\boldsymbol{\eta}}^{k,u}\big] \times \big[\bar{\boldsymbol{p}}^{k,l},\,\bar{\boldsymbol{p}}^{k,u}\big]$ and construct smaller boxes through interations to finally obtain a conservative estimate of lower-level constraints, as described in Algorithm \ref{subroutine}. 
%
%
%
For simplicity, the upper-level decision variables $(\boldsymbol{y},\boldsymbol{\eta},\boldsymbol{p})$ are shown by $\hat{\boldsymbol{\omega}}$ in the following subroutine.  
\vspace{-4pt}
\begin{algorithm}[H]
	\caption{Subroutine}\label{subroutine}
	\begin{algorithmic}[1]
		\State Set $\mu^{0}=1$
		\State \textit{Loop} $\tau = 0,\dots, T'$
		\Statex \hspace{1mm} (a) \textit{for} $m=1,\dots, |\Gamma|+|N|$
		\Statex \hspace{5mm} \textit{if} $\bar{\omega}_{m}-\frac{\mu^{\tau}}{2}(\hat{\omega}_{m}^{u}-\hat{\omega}_{m}^{l}) < \hat{\omega}_{m}^{l}$, \textit{then} 
		\Statex \hspace{10mm} Set $\hat{\omega}_{m}^{k,l} = \hat{\omega}_{m}^{l},$
		$\hat{\omega}_{m}^{k,u} = \hat{\omega}_{m}^{l}+d(\hat{\omega}_{m}^{u}-\hat{\omega}_{m}^{l})$.
		\Statex \hspace{5mm} \textit{else if} $\bar{\omega}_{m}+\frac{\mu^{\tau}}{2}(\hat{\omega}_{m}^{u}-\hat{\omega}_{m}^{l}) > \hat{\omega}_{m}^{u}$,  \textit{then}
		\Statex \hspace{10mm} Set $\hat{\omega}_{m}^{k,l} = \hat{\omega}_{m}^{u}-\mu^{\tau}(\hat{\omega}_{m}^{u}-\hat{\omega}_{m}^{l}),$
		$\hat{\omega}_{m}^{k,u} = \hat{\omega}_{m}^{u}$.
		\Statex \hspace{5mm} \textit{else}
		\Statex \hspace{10mm} Set $\hat{\omega}_{m}^{k,l} = \hat{\omega}_{m}^{u}-\frac{\mu^{\tau}}{2}(\hat{\omega}_{m}^{u}-\hat{\omega}_{m}^{l}),$
		\Statex \hspace{16mm}
		$\hat{\omega}_{m}^{k,u} = \hat{\omega}_{m}^{u}+\frac{\mu^{\tau}}{2}(\omega_{m}^{u}-\hat{\omega}_{m}^{l})$.
		\Statex \hspace{1mm} (b) \textit{for} $m=1,\dots, |N|$ 
		\Statex \hspace{5mm} Set $y_m^{k,l} = \ceil{ y_m^{k,l}}, \eta_m^{k,l} = \ceil{\eta_m^{k,l}},$
		\Statex \hspace{11mm}
		$y_m^{k,u} = \floor{ y_m^{k,u}}, \eta_m^{k,u} = \floor{\eta_m^{k,u}}$.
		\Statex \hspace{1mm} (c) Check lower-level constraints
		\eqref{eqref:LLCons2d}-\eqref{eq:BPR},\,\eqref{eqref:LLbinary}-\eqref{eqref:rangeAnxiety6},
		\Statex \hspace{6mm} and \eqref{eqref:LLCons2c}-\eqref{eqref:LLNonNeg1} 
		with given point $\bar{\omega}^{k}$ on $U^{k}$
		\Statex \hspace{10mm} \textit{if} the constraints are satisfied, 
		\textit{then}
		\Statex \hspace{10mm} Terminate.
		\Statex \hspace{10mm} \textit{else} Set $\mu^{\tau}=\mu^{\tau+1}$.
		\Statex End.
	\end{algorithmic}
\end{algorithm}

\vspace{-3mm}

We now resume the exact solution technique through an upper bounding procedure. The upper bound of the proposed bi-level program is obtained by solving the following problem
that includes the constraints of lower-level problem given the values of upper-level decision variables, i.e., points $(\bar{\boldsymbol{y}},\bar{\boldsymbol{\eta}},\bar{\boldsymbol{p}})$ found in the lower bounding procedure. 
%
%
%
\vspace{-1mm}
\begin{subequations}
	\label{eq:upperBound}
	\begin{align}
		&\mbox{UBD}=\operatornamewithlimits{min}_{\boldsymbol{x},\boldsymbol{z},\boldsymbol{v}} \sum_{i \in N} \big(\bar{\eta}_{i} C_{i}-\alpha\,\sum_{t \in \Gamma}\sum_{a\in A} \sum_{od \in OD} \bar{p}_{i}^t x_{a}^{t,od,i} \big), \label{eq:upperBoundObj}\\
		&\mbox{subject to}\epc \eqref{eqref:LLCons2d}-\eqref{eq:BPR},\,\eqref{eqref:LLbinary}-\eqref{eqref:rangeAnxiety2},\,\eqref{eqref:rangeAnxiety6},\nonumber\\
		&\epc \epc \epc \epc \,\, \eqref{eqref:LLCons2c}-\eqref{eqref:LLNonNeg1},\,\eqref{eqref:lowerLevelglobal2}-\eqref{eqref:lowerLevelglobal5},\,\mbox{and}\nonumber\\
		&\bar{f}_{i}^{t} = \bar{f}_{i}^{t-1} - \sum_{od \in OD}\sum_{a \in A_{i}^{+}} x_{a}^{t,od,i}\nonumber\\
		&\epc \epc \epc \,  + \sum_{od \in OD}\sum_{a \in A_{i}^{-}} x_{a}^{t,od,i},\,\forall i \in N,\,t \in \Gamma,\,\label{eq:upperBound1}\\
		&\sum_{od \in OD}\sum_{a \in A_{i}^{-}} x_{a}^{t,od,i} \le \bar{\eta}_{i} - \bar{f}_{i}^{t-1},\forall i \in N,t \in \Gamma\setminus\{0\},\label{eq:upperBound2}\\
		&\lambda_{od}^{t} = H(\bar{\sigma}_{od}^{t}) = g_{od}^{t} - b \bar{\sigma}_{od}^{t},\forall od \in OD, t \in \Gamma,\label{eq:upperBound3}\\
		& 
		\sum_{a \in A} \left(\int_{0}^{v_{a}^{t}} R_{a}^{t}(\omega)\,d\omega+ \gamma\,\sum_{i \in N}\sum_{od \in OD} \bar{p}_{i}^{t}\,x_{a}^{t,od,i}\right)\nonumber\\
		&\epc \epc \epc \epc \epc \epc \epc \epc \epc \epc \epc \le \bar{F}^{l*}+\varepsilon_{F}^{l},\,\label{eq:upperBound7}\\
		&
		\mbox{LBD} \le \sum_{i \in N} \big(\bar{\eta}_{i} C_{i}-\alpha\,\sum_{t \in \Gamma}\sum_{a\in A} \sum_{od \in OD} \bar{p}_{i}^t x_{a}^{t,od,i} \big).\label{eq:upperBound8}
	\end{align}
\end{subequations}
%
where $\varepsilon_{F}^{l}$ represents the maximum violation of the lower-level objective value $\bar{F}^{l*}$. 
The proof of convergence of the proposed exact algorithm is described below, based on Mitsos \cite{mitsos2010global}.
\begin{proposition} \label{Optimality}
	Based on Theorem 1 in Mitsos \cite{mitsos2010global}, the proposed exact algorithm terminates finitely if the optimality gap of the mixed-integer non-linear problem (MINLP), i.e., $\tilde{\varepsilon}$ and $\varepsilon_{F2}^{l}$, satisfies
 	\begin{subequations}
		\label{eq:proofCond}
 		\begin{align}
 			&
			0 < \tilde{\varepsilon} \le \mbox{min}{\{\varepsilon_{F}^{u}/2,\,\bar{\varepsilon}_{F}^{u},\,\varepsilon_{F1}^{l}\}}\,\,\,\mbox{and} 
 			\,\,\, 0 < \varepsilon_{F2}^{l} < \varepsilon_{F}^{l} - \tilde{\varepsilon}.\nonumber
 		\end{align}
 	\end{subequations}	
 	%
	
	
\end{proposition}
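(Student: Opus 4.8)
The plan is to reduce the statement to a direct application of Theorem~1 in Mitsos~\cite{mitsos2010global} by checking that the proposed bi-level program and its associated bounding subproblems satisfy the hypotheses of that theorem, and then transcribing the theorem's conclusion into the notation of this paper. First I would verify the structural prerequisites: (i) all decision variables $(\boldsymbol{y},\boldsymbol{\eta},\boldsymbol{p},\boldsymbol{x},\boldsymbol{z},\boldsymbol{v})$ lie in explicitly bounded host sets $U$ and $L$, which is exactly the first standing assumption; (ii) the upper-level integer variables $(\boldsymbol{y},\boldsymbol{\eta})$ take finitely many values, so the outer enumeration in Mitsos's scheme is over a finite set; and (iii) for every fixed integer assignment the remaining constraints \eqref{eqref:ULDisutility1}--\eqref{eq:BPR}, \eqref{eqref:LLbinary}--\eqref{eqref:rangeAnxiety6}, \eqref{eqref:ULPriceRange1}--\eqref{eqref:StrategicCharger1}, \eqref{eqref:LLCons2c}--\eqref{eqref:LLNonNeg1}, and \eqref{eqref:waitingApprox} are continuous on the compact sets $\{U,L\}$, which is the second standing assumption. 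The linearization \eqref{eqref:waitingApprox} of the Erlang-C relation \eqref{eqref:waiting1} is what removes the factorial/exponential nonconvexity from the feasible set and makes this continuity claim hold; I would note explicitly that the lower-level objective \eqref{eqref:LowerLevelObj} is the sum of a convex Beckmann integral term and a linear pricing term, so the inner problem \eqref{eq:globalLL} is solved to global optimality exactly as required.

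Next I would check the ``slackness'' hypothesis that Mitsos's convergence argument needs: for each $(\bar{\boldsymbol{y}},\bar{\boldsymbol{\eta}},\bar{\boldsymbol{p}})\in U^{\infty}$ there exists a lower-level vector $(\tilde{\boldsymbol{x}},\tilde{\boldsymbol{z}},\tilde{\boldsymbol{v}})\in L$ satisfying \eqref{eq:assumption1}--\eqref{eq:assumption4} for every $\varepsilon_{F1}^{l}\ge 0$. This is precisely the third standing assumption stated in Section~\ref{ExactMethod}, and I would invoke the remark already made there --- that given upper-level decisions the feasible EV network always admits a path for each $od\in OD$, so the equilibrium traffic assignment \eqref{eqref:LowerLevelObj} always has a solution, and the strict inequalities in \eqref{eq:assumption1}--\eqref{eq:assumption3} can be met because the big-$M$ constraints \eqref{eqref:rangeAnxiety3}--\eqref{eqref:rangeAnxiety5} are non-binding whenever $y_i$ takes its realized value. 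With the host sets, continuity, finite integrality, exact inner solves, and this regularity in hand, the hypotheses of Mitsos's Theorem~1 are all met, so the lower-bounding problem \eqref{eq:relaxed}, the auxiliary second-step problem \eqref{eq:secondStep} (via Algorithm~\ref{subroutine} producing the boxes $U^{k}$), and the upper-bounding problem \eqref{eq:upperBound} together form a valid instance of his algorithm.

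Finally I would derive the stated parameter condition. Mitsos's theorem guarantees finite termination with an $\varepsilon_{F}^{u}$-optimal upper-level point provided the MINLP optimality tolerance $\tilde{\varepsilon}$ used when solving the lower-bounding and subroutine problems is small enough that (a) the error it introduces into the upper-level objective does not exceed half the target gap, i.e.\ $\tilde{\varepsilon}\le\varepsilon_{F}^{u}/2$; (b) it does not exceed the separate tolerance $\bar{\varepsilon}_{F}^{u}$ controlling the gap between LBD and UBD in \eqref{eq:upperBound8}; and (c) it does not exceed the lower-level feasibility margin $\varepsilon_{F1}^{l}$ appearing in \eqref{eq:assumption4}, so that the point generated by the second step remains feasible for the true lower-level problem. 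Combining (a)--(c) gives $0<\tilde{\varepsilon}\le\min\{\varepsilon_{F}^{u}/2,\bar{\varepsilon}_{F}^{u},\varepsilon_{F1}^{l}\}$. The companion requirement $0<\varepsilon_{F2}^{l}<\varepsilon_{F}^{l}-\tilde{\varepsilon}$ then ensures that the relaxation of the optimality cut in \eqref{eq:secondStepCons1} is strictly looser than the cut \eqref{eq:upperBound7} actually imposed in the upper-bounding step, so each new cut strictly separates the previously visited point and the index set $K$ stays finite; this is exactly the content of \eqref{eq:proofCond}.

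The main obstacle I anticipate is not any single calculation but the bookkeeping of matching this paper's four-way split of tolerances $(\tilde{\varepsilon},\varepsilon_{F1}^{l},\varepsilon_{F2}^{l},\varepsilon_{F}^{l},\varepsilon_{F}^{u},\bar{\varepsilon}_{F}^{u})$ to the (differently named) tolerances in Mitsos's Theorem~1, and verifying that the linearized waiting-time constraint \eqref{eqref:waitingApprox} genuinely preserves every property his proof uses --- in particular that replacing \eqref{eqref:waiting} by \eqref{eqref:waitingApprox} does not destroy the existence of the slack lower-level vector in the third assumption. Once that correspondence is pinned down, the convergence conclusion and the finite bound on the number of iterations follow verbatim from his argument.
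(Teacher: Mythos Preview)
Your proposal is correct and follows essentially the same route as the paper: verify that the three standing assumptions of Section~\ref{ExactMethod} (bounded host sets, continuity for fixed integers, existence of a slack lower-level vector) place the problem squarely within the hypotheses of Theorem~1 in Mitsos~\cite{mitsos2010global}, and then read off the tolerance conditions. The only noteworthy difference is one of presentation rather than substance. You treat Mitsos's theorem as a black box and focus on hypothesis verification and tolerance bookkeeping, whereas the paper's proof in Appendix~\ref{sec:appProof} actually transcribes the core of Mitsos's argument into the present notation: it invokes Lemma~3 of \cite{mitsos2010global} to obtain the inequality on $F^{l}(\cdot,\boldsymbol{x}^{k},\boldsymbol{z}^{k},\boldsymbol{v}^{k})$, carries out the two-$\delta$ continuity argument in $\boldsymbol{p}$ that uses $\varepsilon_{F}^{l}-\tilde{\varepsilon}-\varepsilon_{F2}^{l}>0$, and closes with the explicit bound $\text{UBD}-\text{LBD}\le 2\tilde{\varepsilon}\le\varepsilon_{F}^{u}$. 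Your reading of the roles of $\tilde{\varepsilon}\le\varepsilon_{F}^{u}/2$ and $\varepsilon_{F2}^{l}<\varepsilon_{F}^{l}-\tilde{\varepsilon}$ matches these steps; your interpretation of $\bar{\varepsilon}_{F}^{u}$ is slightly off (in the paper it governs membership of the lower-bounding iterate in $U^{\psi}(\bar{F}^{u})$, not the gap constraint \eqref{eq:upperBound8}), but this does not affect the validity of the argument.
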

\begin{proof}
See Appendix \ref{sec:appProof}.
\end{proof}

\section{NUMERICAL EXPERIMENTS}\label{sec:numericalresults}
\noindent 
We have coded our model \eqref{eqref:ULDisutility1}-\eqref{eq:BPR},\,\eqref{eqref:LLbinary}-\eqref{eqref:rangeAnxiety6},\,\eqref{eqref:UpperLevelObj}-\eqref{eqref:LLNonNeg1}, and \eqref{eqref:waitingApprox}, and the solution technique described in Section \ref{sec:Algorithm} in Java, utilizing a commercial solver LINDO \citep{lin2009global}. 
Parameters $w$ and $q$ in BPR function \eqref{eq:BPR} are set to $0.15$ and $4$, receptively.
A Poisson distribution is applied to generate the initial demand patterns for early AM, AM peak, mid-day, PM peak, and evening time-of-days in a business day.
\vspace{-2mm}

\subsection{Hypothetical Dataset}\label{subsec:hypo}
\noindent This section first summarizes the assumptions and then presents the numerical results and sensitivity analyses based on a hypothetical network dataset including 18 nodes and 58 links, as shown in Figure \ref{fig:hypoNetwork}.
%
We assume that 30\% of EV travel demand originates from node 1 and completes in node 8, while the remainder goes to node 11.
We have assumed a planning horizon from 8 AM to 5:30 PM with 30 $min$ time periods. EV users are assumed to get charged in one time period.
The average vehicle arrivals over time-of-days are assumed to be 10, 20, 15, 10, and 20 for early AM, AM peak, mid-day, PM peak, and evening, respectively, for a medium demand level.
We assume that the low demand level is half of the medium demand.
Assuming the same demand distribution on different days, we set the objective function parameter $\alpha$ to 365, representing the number of days in a year. 
We assume a maximum driving range limit of 15 $miles$ for EVs. All network nodes except the ones located on the outer border (e.g., nodes 1 to 12) are assumed as candidate locations for the deployment of EV facilities. Furthermore, it is assumed that at most three chargers can be deployed in a selected charging facility.
The upper and lower bounds of charging prices per time step are assumed to be $u_i=\$15.0$ and $l_i=\$0.1, \forall i \in N$.
We have run the code on a desktop with quad-core 3.6 GHz CPU and 16 GB of memory.
\vspace{-3mm}
\begin{figure}[h]
	\begin{center}
		\includegraphics[height=1.90 in]{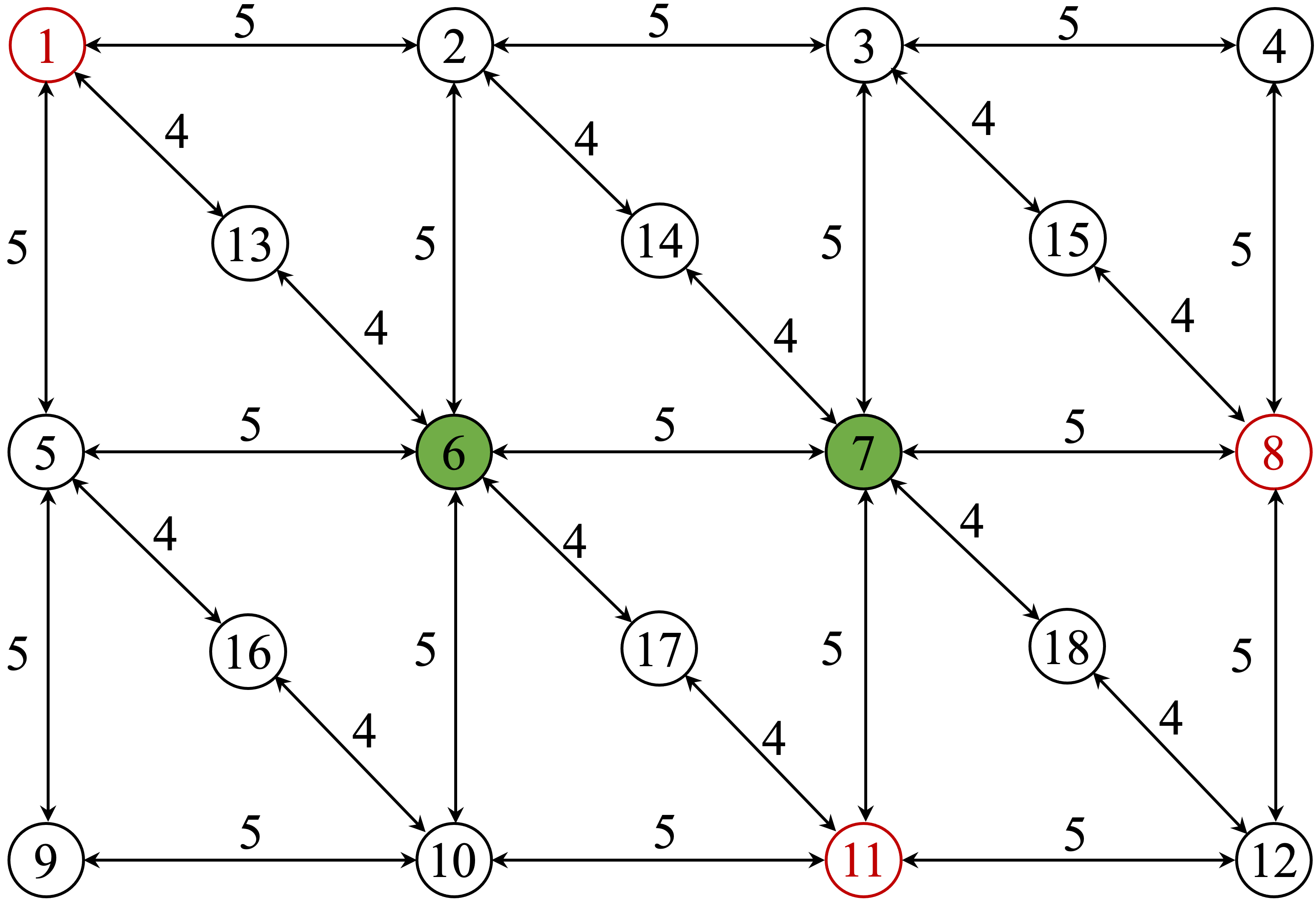}
		\vspace{-2mm}
		\caption{Hypothetical network. \hspace{5in}}
		\label{fig:hypoNetwork}
	\end{center}
\end{figure}
\vspace{-3mm}
\subsubsection{Results}\label{subsec:results}
EV travels are originated from node 1 and completed at nodes 8 and 11 in a day from 8 AM to 5:30 PM. Considering a medium demand level, nodes 6 and 7 are selected as optimal locations to deploy EV charging facilities, as shown in Figure \ref{fig:hypoNetwork}. The physical capacity of facilities installed at nodes 6 and 7 is 4 and 2 chargers, respectively.
To address the EV users' range anxiety concern, Figure \ref{fig:maxdist} shows the maximum distance traveled from the last visited EV charging facility for the same demand level. As indicated, the travel range limit of 15 $miles$ is not violated by EV users, which confirms the appropriate location of selected charging facilities in the network.
\vspace{-3mm}
\begin{figure}[h]
	\begin{center}
		\includegraphics[height=1.75 in]{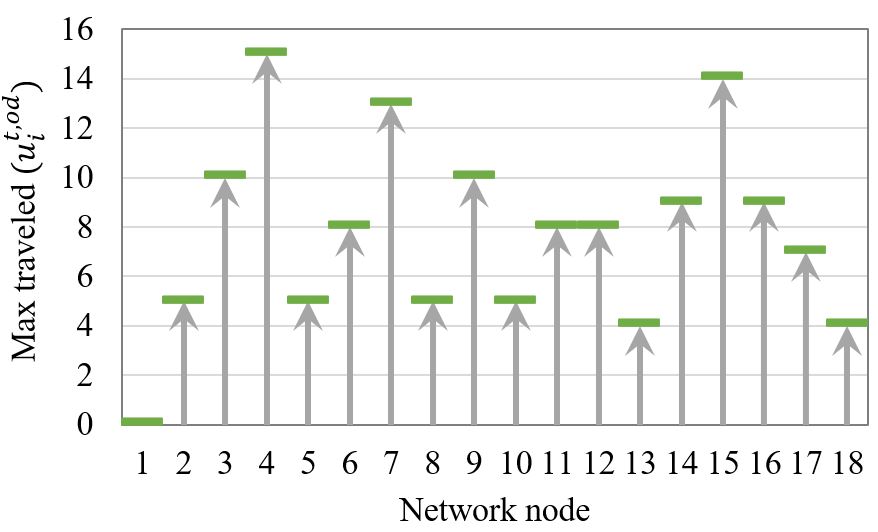}
		\vspace{-3mm}
		\caption{Max distance traveled from last visited node in med. demand.}
		\label{fig:maxdist}
	\end{center}
\end{figure}
\vspace{-5mm}
Table \ref{table:ChargingPrice} presents the charging pricing scheme in the selected EV facilities. Since charging demand from node 1 to node 11 is high, charging prices in facility 6 are higher than facility 7 to properly respond to the user arrivals.
Besides, the charging prices increase during the AM and PM peak hours due to significantly higher demand that increases the waiting time of users to find available chargers.
When user arrivals decrease, so does the charging prices, because more chargers are vacant.
\vspace{-4mm}
\begin{table}[H]
	\caption{Charging price (\$) at selected facilities over the planning horizon. \hspace{3in}}
		\vspace{-2mm}
	\begin{center}
		\small
		\begin{tabular}{c c c c c c c c c c c}
			\hline
			\hline
			\textbf{Time step} & 08:00 & 08:30 & 09:00 & 09:30  \\[0.5ex]
			\hline			
			Facility at node 6 & 0.15 & 0.45 & 0.87 & 3.34 \\
			Facility at node 7 & 0.15 & 0.36 & 0.71 & 2.83 \\[5pt]
			\textbf{Time step (cont.)} & 10:00 & 10:30 & 11:00 & 11:30\\
			\hline
			Facility at node 6 &  5.13 & 5.27 & 6.21 & 5.08\\
			Facility at node 7 & 4.32 & 4.92 & 5.52 & 4.10\\[5pt]
			\textbf{Time step (cont.)} & 12:00 & 12:30 & 13:00 & 13:30\\
			\hline
			Facility at node 6 & 4.62 & 4.79 & 4.38 & 2.92 \\
			Facility at node 7 & 4.44 & 3.66  & 3.80 & 2.84 \\[5pt]
			\textbf{Time step (cont.)}  & 14:00 & 14:30 & 15:00 & 15:30\\
			\hline
			Facility at node 6 & 2.47 & 3.40 & 5.12 & 5.69 \\
			Facility at node 7 & 2.26 & 2.90 & 4.00 & 4.26 \\[5pt]
		    \textbf{Time step (cont.)}  & 16:00 & 16:30 & 17:00 & 17:30\\
			\hline
			Facility at node 6 & 7.64 & 9.04 & 10.79 & 11.31\\
			Facility at node 7 & 6.18 & 7.72 & 8.83 & 9.05\\
			\hline
		\end{tabular}
		\label{table:ChargingPrice}
	\end{center}
\end{table}
	\vspace{-4mm}
%

%

Table \ref{table:Average} presents the average and standard deviation of charging prices over a day from 8 AM to 5:30 PM for low and medium demand levels.
The standard deviation indicates the impact of user arrivals on the pricing scheme over time. 
As indicated, both values increase by 23.36\% and 38.69\% in medium compared to low demand level.
Numerical results show that the number of chargers is affected by various demand distributions. For instance, the number of chargers shall be increased from one to two at charging facility 7 when the demand level switches from low to medium. Therefore, it is expected that the agency installs at least two chargers at the designated node to ensure higher demand satisfaction in the future.
Similarly, the agency deploys a facility at node 6 to serve users in all demand levels, while the number of chargers depends on the demand intensity.  
%
%
\vspace{-4mm}
\begin{table}[H]
	\caption{Average and standard deviation of charging prices at selected facilities for low and medium demand levels.}
	\begin{center}
		\small
		\begin{tabular}{M{1.0cm} M{1.7cm} M{1.7cm} M{1.7cm}}
			\hline
			\hline
			 & Low demand & Med demand & \% Diff \\[0.5ex]
			\hline			
			avg (\$) & 3.68 & 4.54 & 23.36 \\
			std (\$) & 1.99 & 2.76 & 38.69 \\
			\hline
		\end{tabular}
		\label{table:Average}
	\end{center}
\end{table}
	\vspace{-4mm}

Figure \ref{fig:bounds} shows the convergence of upper and lower bounds with a gap of 4.58\% in the bi-level optimization program. The CPU time for the algorithm is 21.4 $hr$ in the medium demand level. As indicated in Figure \ref{fig:bounds}, adding cuts in the lower bounding procedure improves the solution during the initial iterations, while its improvement rate decreases as the tightening procedure of the feasible region in lower bounding proceeds. 
\vspace{-2mm}
\begin{figure}[H]
	\begin{center}
		\includegraphics[height=1.30 in]{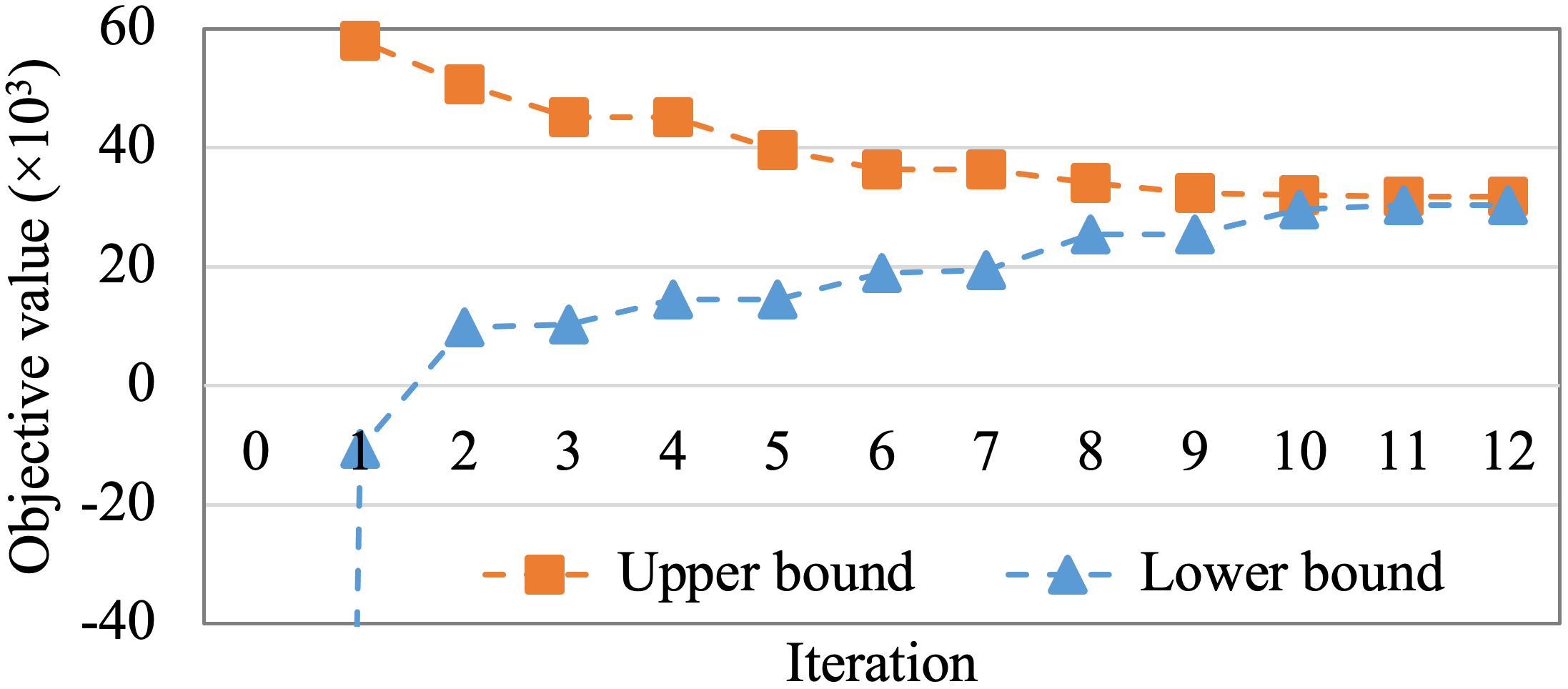}
		\vspace{-2mm}
		\caption{Convergence of upper bound and lower bound (\$).}
		\label{fig:bounds}
	\end{center}
\end{figure}
\vspace{-10pt}
\subsubsection{Sensitivity Analysis}\label{subsec:sensitivity}
\noindent We perform an analysis to show the sensitivity of the solutions to parameter $\mu$ in Algorithm \ref{subroutine}. Hence, an alternative updating procedure is implemented to obtain $\mu$, as shown in Figure \ref{fig:SBU_result2}. 
In the one procedure, the value of $\mu$ is divided by 2 at each iteration, while in the alternative case, the iteration number affects the value of $\mu$ as $\mu=1/\tau$ (see Figure \ref{fig:SBU_result2}). Tightening the bounds of upper-level decision variables $(\boldsymbol{y},\boldsymbol{\eta},\boldsymbol{p})$ by a factor of 2 in successive iterations leads to a convergence after 12 iterations with a gap of 4.58$\%$ in 21.4 $hr$. However, the alternative updating procedure based on $\mu=1/\tau$ increases the number of iterations by 58.33$\%$, and therefore the CPU time increases by 8.87$\%$ (i.e., 23.3 $hr$) as the alternative procedure offers a more relaxed feasible region in the lower bounding procedure.
%
\vspace{-6mm}
\begin{figure}[H]
	\begin{center}
		\includegraphics[height=1.65 in]{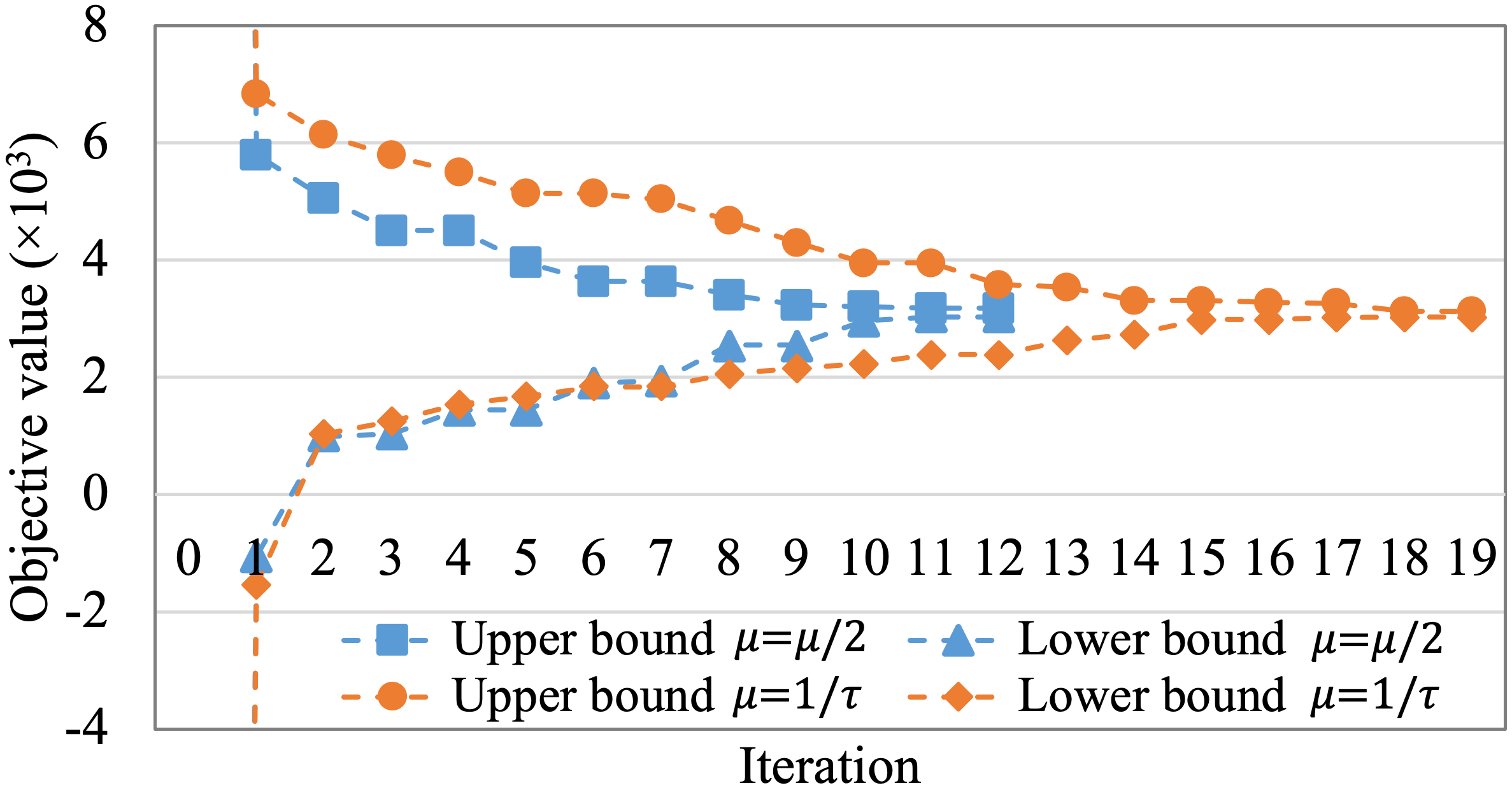}
		\vspace{-6mm}
		\caption{The convergence of bounds (\$) with different $\mu$ updates.}
		\label{fig:SBU_result2}
	\end{center}
\end{figure}
\vspace{-10mm}

\subsection{Real-world dataset}\label{subsec:case2}
\noindent The proposed methodology is applied to a real-world case study in Long Island, NY. The network includes 30 nodes, 340 links, and seven candidate charging facility locations in Stony Brook University campus, as shown in Figure \ref{fig:case2}(a).
The average arrivals over time-of-days are assumed to be 40, 50, 45, 30, 55 EVs for early AM, AM peak, mid-day, PM peak, and evening, respectively. EV users make their trips from three origins to three destinations in Long Island, NY. 
The origins and destinations are located in Brooklyn, NY, Queens, NY, Hempstead, NY, Hampton Bays, NY, Greenport, NY, and Mattituck, NY that represent an average trip length of 50 $miles$ from origins to campus. 
The real-world dataset includes 2,219,855 decision variables.
Due to the computational burden, a more efficient desktop computer with 72 2.10 GHz CPUs and 128 GB memory (compared to the test case) is utilized. 
The charging price is assumed to vary between a minimum of $l_i=0.1$ and maximum of $u_i=15.0, \forall i \in N$, per time step. We also set $\alpha=365$.

Figure \ref{fig:case2}(a) shows the location of candidate facilities to deploy EV chargers, selected facilities, and origins/destinations of EV trips. Since the majority of EV trips take the routes going to the south campus, three candidate facilities at nodes 14, 15 and 16 are suggested to satisfy the demand. Besides, the remaining demand will go to the center of the campus that encourages the deployment of chargers at nodes 18 and 19. 
As indicated in Figure \ref{fig:case2}(b), all EV users reach a charging facility or their final destination with an adequate charging level without violating the travel range limit of 50 $miles$.
\vspace{-2mm}
\begin{figure}[h]
  \begin{subfigure}
    \centering\includegraphics[scale=0.33]{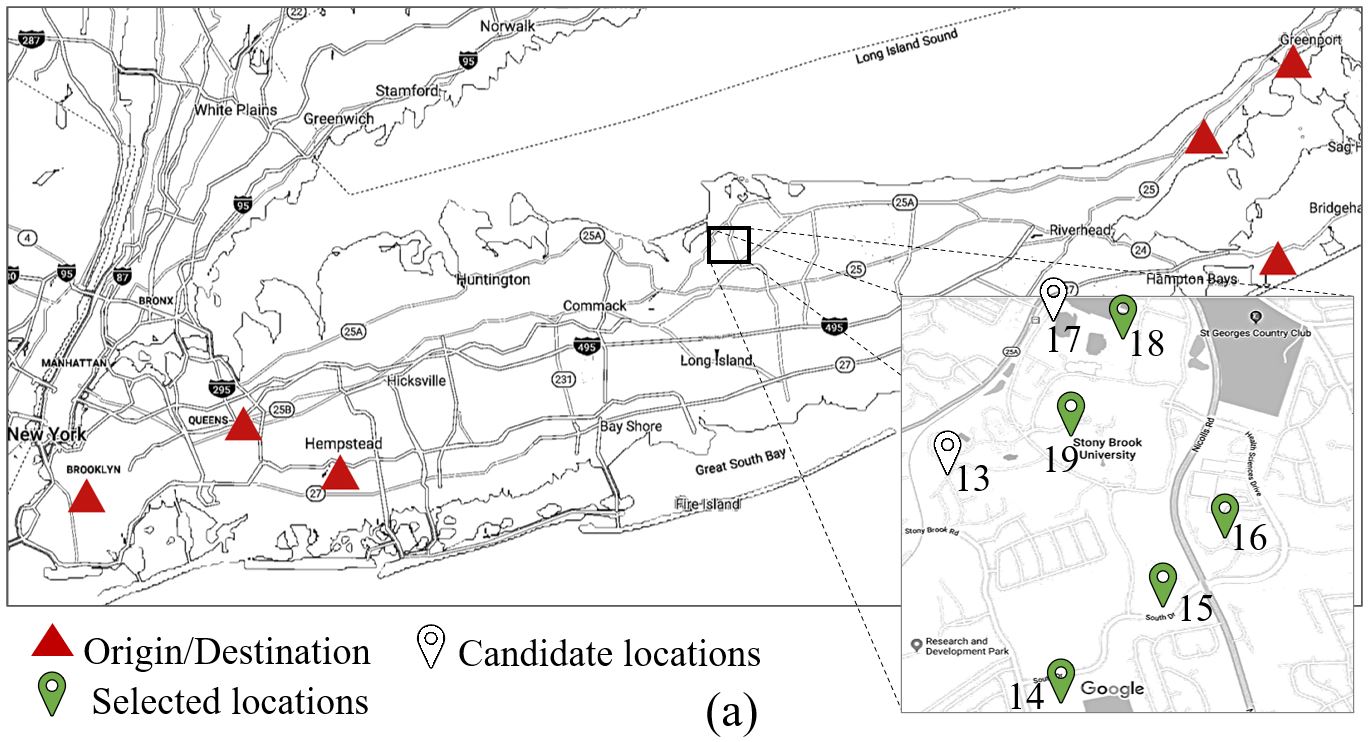}
  \end{subfigure}
  \begin{subfigure}
    \centering\includegraphics[scale=0.41]{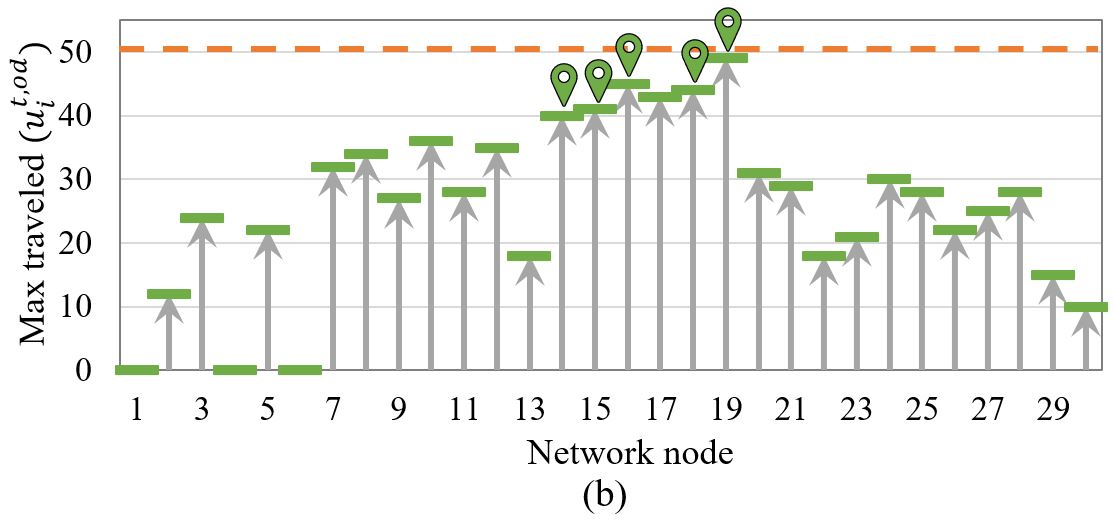}
    \vspace{-8mm}
	\caption{(a) Candidate facility locations in real-world case study. [Map source: Google, accessed August 7, 2019.] and (b) Max distance traveled from last visited node in med. demand.}  
	\label{fig:case2}
	\end{subfigure}
\end{figure}
%
%
Figure \ref{fig:chargingPriceReal} presents the impact of selected locations to install charging facilities on their pricing scheme over the planning horizon. The results indicate that higher demand in AM and PM peak hours imposes higher prices at the selected facilities.
When a higher number of chargers are available, the charging price decreases. This trend is observed at the beginning of the planning horizon (when the majority of chargers are available before new arrivals) and later during the day.
The increasing trend in prices towards the end of the day, in this case study, is due to the demand accumulation (as faculty/graduate students often tend to work for longer hours).
\vspace{-4mm}
\begin{figure}[H]
	\begin{center}
		\includegraphics[height=2.05in]{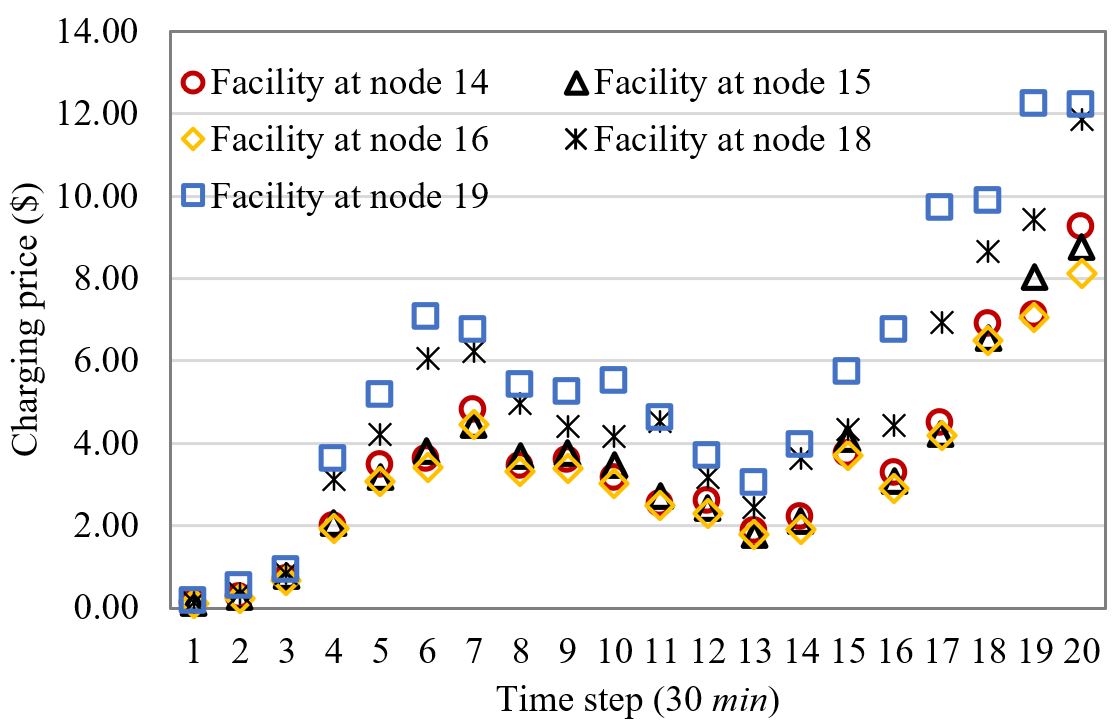}
		    \vspace{-7mm}
		\caption{Charging price (\$) at selected facilities over the planning horizon (8:00 AM to 5:30 PM).}
		\label{fig:chargingPriceReal}
	\end{center}
\end{figure}
\vspace{-12pt}
\noindent In addition, Figure \ref{fig:eachOccuPrice} suggest that a higher occupancy can impose higher charging prices in AM and PM peak hours. 
For instance, as shown in Figure \ref{fig:eachOccuPrice}(a), at charging facilities located farther away from popular locations (e.g., facility at node 14), higher prices can lead to lower occupancy (e.g., at time period 7) as users tend to park and charge at facilities closer to their destination. Another interpretation is that lower occupancy can lead to an increase in the charging price at certain un-popular locations to compensate for the associated loss of revenue, particularly at locations that do not get sufficiently high number of customers (except for users whose destinations are nearby). On the other hand, Figure \ref{fig:eachOccuPrice}(e) indicates that facility at node 19 increases the charging prices even with the growth in the occupancy (e.g., at time periods 6 and 7) that is primarily due to its relatively convenient location.

Table \ref{table:AverageReal} presents the average and standard deviation of charging prices over a day (8:00 AM to 5:30 PM) based on low and medium demand levels.
\vspace{-2mm}
\begin{table}[H]
	\caption{Average and Standard Deviation of Charging Prices at Selected Facilities for Low and Medium Demand Levels.}
	\vspace{-3mm}
	\begin{center}
		\small
		\begin{tabular}{M{1.0cm} M{1.7cm} M{1.7cm} M{1.7cm}}
			\hline
			\hline
			 & Low demand & Med demand & \% Diff \\[0.5ex]
			\hline			
			avg (\$) & 2.97 & 4.11 & 38.21 \\
			std (\$) & 1.78 & 2.58 & 45.19 \\
			\hline
		\end{tabular}
		\label{table:AverageReal}
	\end{center}
\end{table}
\vspace{-4mm}
\noindent Similar to the hypothetical case study, higher demands can increase the charging prices.
Besides, the selected charging facilities are fully utilized in the medium demand case. Given a large number of available chargers, EV users can have highly random choices that can lead to under-utilizing the un-popular charging locations (e.g., those that are farther away from main destinations). 
Such behavior can enforce the charging agency to adjust the charging prices by location based on the available demand level to better regulate the usage of all charging facilities. This behavior can be observed in Table \ref{table:AverageReal} as the average and standard deviation of charging prices are increased by 38.21\% and 45.19\% in the medium demand compared to the low demand level.

Figure \ref{fig:boundsReal} presents the convergence of upper and lower bounds of the proposed bi-level optimization program that reaches to a 2.89\% gap after 21 iterations in 68.8 $hr$ for the medium demand level. It can be observed that the solution considerably improves (i.e., the gap between the upper- and lower-bound substantially decreases) by adding cuts in the lower bounding procedure during the initial iterations. However, the improvement rate decreases as the lower bounding proceeds. 
The CPU time in the global optimization algorithm varies over iterations. A set of parametric upper bounds in initial iterations lead to lower computational time, while tightening the feasible area over iterations increases the complexity of lower bounding procedure (see the increasing trend in the runtime over the next iterations). 
%
\begin{figure}[H]
	\begin{center}
		\includegraphics[height=4.9 in]{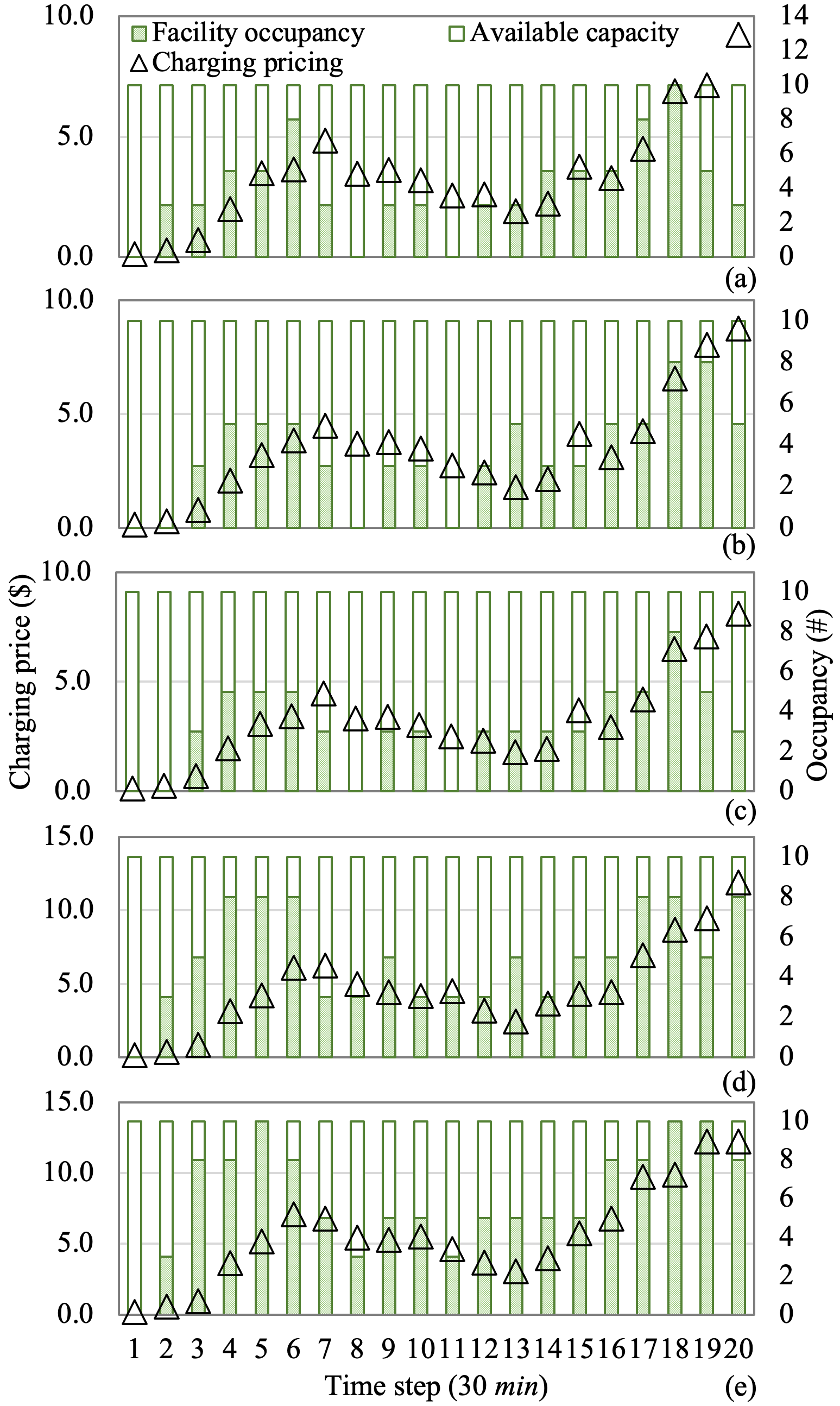}
		\vspace{-2mm}
		\caption{Charging and occupancy of facility over planning horizon at node (a) 14, (b) 15, (c) 16, (d) 18, and (e) 19.}
		\label{fig:eachOccuPrice}
	\end{center}
\end{figure}
%
%
%
%
\begin{figure}[H]
	\begin{center}
		\includegraphics[height=1.51 in]{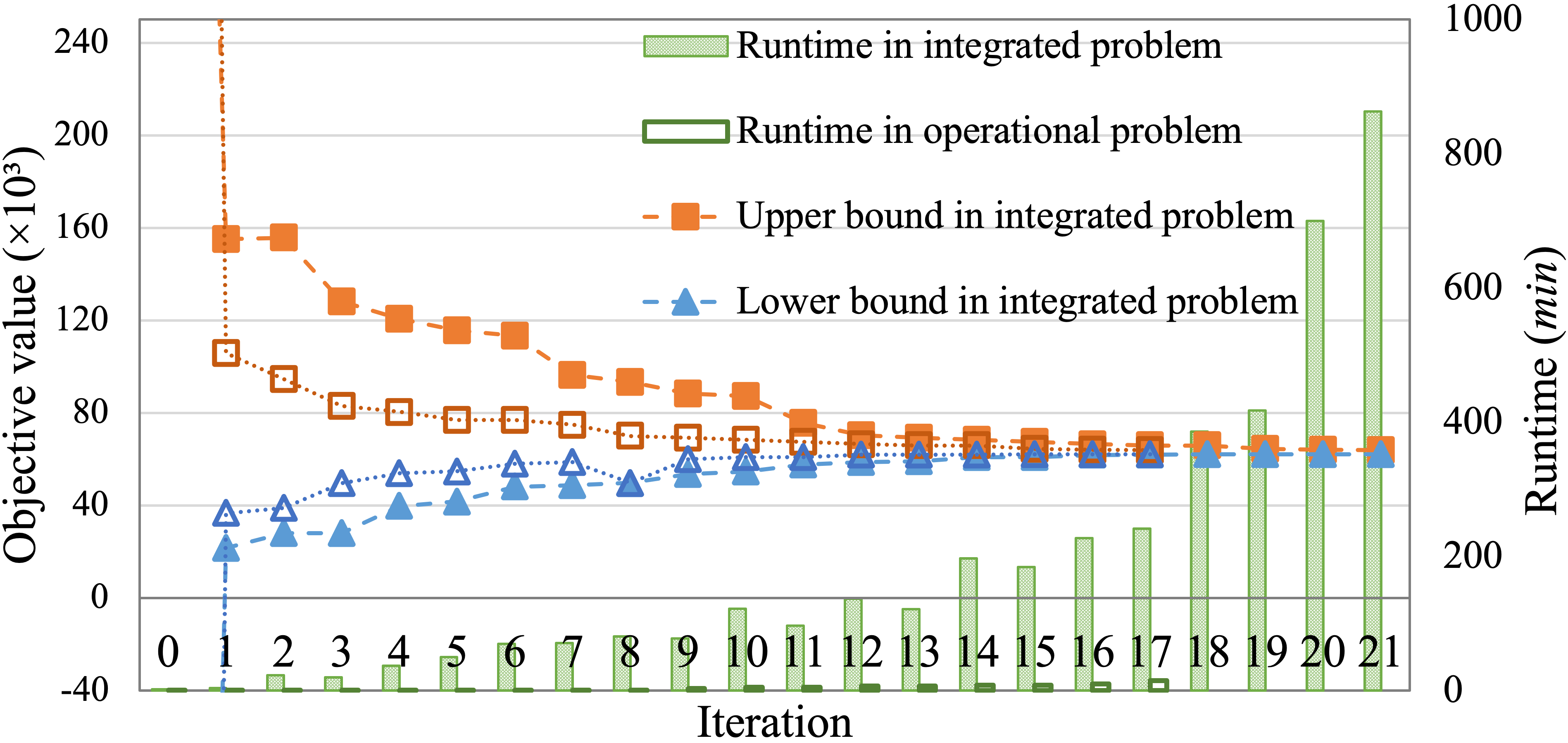}
		\vspace{-7mm}
		\caption{Convergence of upper bound and lower bound (\$) of the bi-level program and the runtime over iterations.}
		\label{fig:boundsReal}
	\end{center}
\end{figure}
\vspace{-4mm}
%
To evaluate the performance of the proposed solution technique on the operational aspect of the problem (i.e., dynamic pricing scheme and EV users’ decisions given the strategic charging network design), the proposed bi-level optimization program \eqref{eqref:UpperLevelObj}-\eqref{eqref:LLNonNeg1} is solved again given the optimal value of integer variables $\boldsymbol{y}$, $\boldsymbol{\eta}$, and $\boldsymbol{e}$ obtained from our original integrated optimization framework. Figure \ref{fig:boundsReal} shows the comparison of the test case results to those of the original framework. It can be observed that the computational time is significantly lower in the operational problem. 
As expected, the optimal operational decisions $\boldsymbol{x}$, $\boldsymbol{z}$, $\boldsymbol{v}$, and $\boldsymbol{p}$ are found in 1.2 $hr$ after 17 iterations.

Figure \ref{fig:objValueChargers} shows the values of upper- and lower-level objective functions over the algorithm iterations. The optimal number of chargers are also shown over iterations. 
Higher charging prices in the initial iterations (obtained from upper-level problem) has decreased the utilization. The initial iterations indicate that the algorithm suggests the deployment of a lower number of chargers (i.e., at most 5 chargers at each facility) to meet the demand. This trend is obtained without the consideration of EV users' travel time (obtained from lower-level problem). 
\vspace{-3mm}
\begin{figure}[H]
	\begin{center}
		\includegraphics[height=2.4 in]{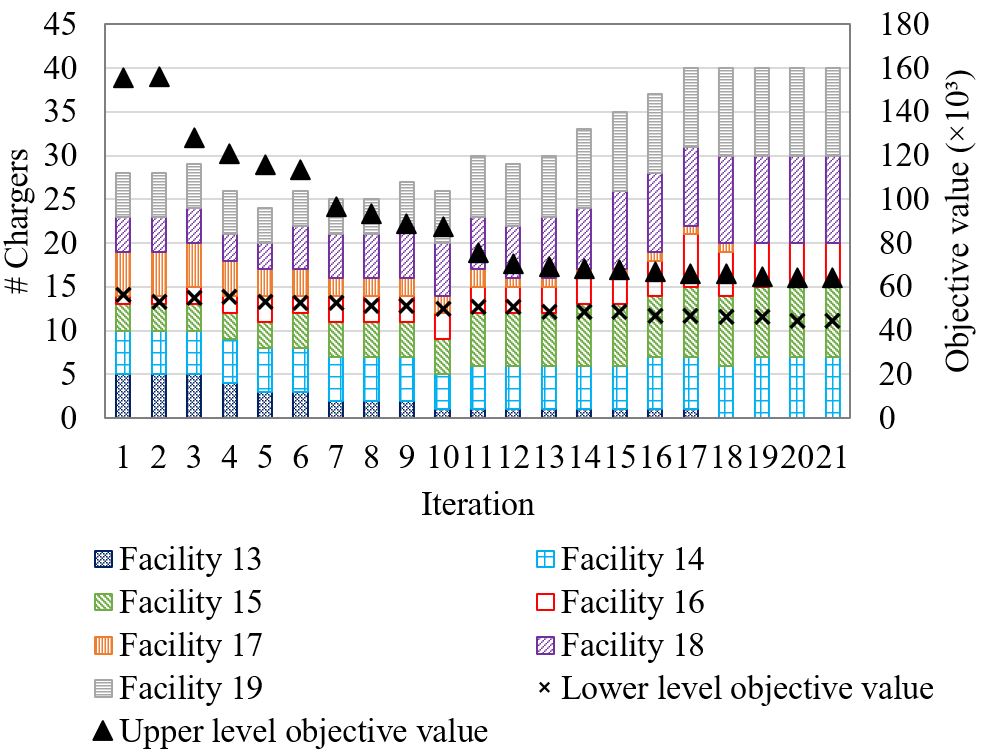}
		\vspace{-5mm}
		\caption{Upper-level and lower-level objective values (\$) and optimal number of chargers over iterations.}
		\label{fig:objValueChargers}
	\end{center}
\end{figure}
%
\noindent Therefore, the lower-level objective function will be added by improving the parametric upper bound (that is the parametric optimal value of lower-level problem as a function of upper-level variables) over iterations to capture the impact of EV travel times (see constraints \eqref{eq:relaxedCons}) and update the adequate number of chargers. 
Hence, a better understanding of the number of chargers will be obtained while the algorithm proceeds. As indicated, the optimal number of chargers is identified from iteration 17; however, their location has not fully optimized yet. 
Toward the convergence of the upper- and lower-bound (i.e., starting from iteration 19), the optimal location and capacity of charging facilities will be obtained that are facilities located at node 14 with 7 chargers, node 15 with 8 chargers, node 16 with 5 chargers, and nodes 18 and 19 each with 10 chargers.
\section{CONCLUSIONS}\label{sec:conclusion}
\noindent This study proposes a bi-level optimization program for network design and utilization management of EV charging facilities. The upper level aims to minimize facility deployment costs and maximize the revenue generated from charging activities, while the lower level aims to minimize the users' travel times and charging expenses. 
An iterative algorithm is applied to solve the problem that generates theoretical lower and upper bounds to the proposed bi-level problem. We find the lower bound by solving a global optimization problem that consists of the upper-level objective function, constraints of both upper- and lower-level models, and the parametric upper bound of the optimal solution to the lower-level problem. Then, the parametric upper bound is improved through an iterative process by dividing the feasible region of upper-level program into tighter bounds and finding an optimal solution at each interval. 
The lower bounding procedure iteratively adds cuts to improve the solution, while it decreases the efficiency of the algorithm due to the generation of new constraints \eqref{eq:relaxedCons}. The computational results show that the algorithm yields the desired optimality gap (i.e., 2.89\%) within 21 iterations in 68.8 hours.
The proposed solution technique is applied to a (i) hypothetical network to confirm its solution quality and (ii) real-world case study in Long Island, NY to evaluate its computational performance. 
The results present the impact of added cuts on the solution quality in each iteration. As observed, the optimal location of charging facilities that offer feasible paths for EV users are determined over the initial iterations, while the optimal number of charging spots are obtained over the final iterations.
An analysis of sensitivity has also been conducted to assess the behavior of the methodology based on the choice of updating processes and parameters.
It will be interesting, in the future, to compare the computational performance of the proposed methodology with a benchmark technique. 
It will be worthwhile to study the interaction, partnership, and competition among multiple charging network operators and assess the impact of their business on the pricing scheme and utilization of charging facilities using a multi-leader multi-follower model.
In addition, the impact of non-EVs on travel behavior and traffic assignment can be considered as an interesting future research.

\renewcommand{\baselinestretch}{1.0}
\vspace{-4mm}
\section*{Acknowledgment}
\vspace{-1mm}
The authors would like to thank Avery Mack of the State University of New York at Stony Brook, for her significant helps with data collection and preparation.
\vspace{-2mm}
\small
\bibliographystyle{IEEEtran}
\bibliography{EV}

\begin{thebibliography}{10}
\providecommand{\url}[1]{#1}
\csname url@samestyle\endcsname
\providecommand{\newblock}{\relax}
\providecommand{\bibinfo}[2]{#2}
\providecommand{\BIBentrySTDinterwordspacing}{\spaceskip=0pt\relax}
\providecommand{\BIBentryALTinterwordstretchfactor}{4}
\providecommand{\BIBentryALTinterwordspacing}{\spaceskip=\fontdimen2\font plus
\BIBentryALTinterwordstretchfactor\fontdimen3\font minus
  \fontdimen4\font\relax}
\providecommand{\BIBforeignlanguage}[2]{{%
\expandafter\ifx\csname l@#1\endcsname\relax
\typeout{** WARNING: IEEEtran.bst: No hyphenation pattern has been}%
\typeout{** loaded for the language `#1'. Using the pattern for}%
\typeout{** the default language instead.}%
\else
\language=\csname l@#1\endcsname
\fi
#2}}
\providecommand{\BIBdecl}{\relax}
\BIBdecl

\bibitem{mitsos2010global}
A.~Mitsos, ``Global solution of nonlinear mixed-integer bilevel programs,''
  \emph{Journal of Global Optimization}, vol.~47, no.~4, pp. 557--582, 2010.

\bibitem{zheng2017routing}
H.~Zheng and S.~Peeta, ``Routing and charging locations for electric vehicles
  for intercity trips,'' \emph{Transportation Planning and Technology},
  vol.~40, no.~4, pp. 393--419, 2017.

\bibitem{bahrami2017complementarity}
S.~Bahrami, H.~Z. Aashtiani, M.~Nourinejad, and M.~J. Roorda, ``A
  complementarity equilibrium model for electric vehicles with charging,''
  \emph{International Journal of Transportation Science and Technology},
  vol.~6, no.~4, pp. 255--271, 2017.

\bibitem{zheng2017traffic}
H.~Zheng, X.~He, Y.~Li, and S.~Peeta, ``Traffic equilibrium and charging
  facility locations for electric vehicles,'' \emph{Networks and Spatial
  Economics}, vol.~17, no.~2, pp. 435--457, 2017.

\bibitem{zhang2017incorporating}
A.~Zhang, J.~E. Kang, and C.~Kwon, ``Incorporating demand dynamics in
  multi-period capacitated fast-charging location planning for electric
  vehicles,'' \emph{Transportation Research Part B: Methodological}, vol. 103,
  pp. 5--29, 2017.

\bibitem{li2016design}
X.~Li, J.~Ma, J.~Cui, A.~Ghiasi, and F.~Zhou, ``Design framework of large-scale
  one-way electric vehicle sharing systems: A continuum approximation model,''
  \emph{Transportation Research Part B: Methodological}, vol.~88, pp. 21--45,
  2016.

\bibitem{arslan2016benders}
O.~Arslan and O.~E. Kara{\c{s}}an, ``A benders decomposition approach for the
  charging station location problem with plug-in hybrid electric vehicles,''
  \emph{Transportation Research Part B: Methodological}, vol.~93, pp. 670--695,
  2016.

\bibitem{liu2017locating}
H.~Liu and D.~Z. Wang, ``Locating multiple types of charging facilities for
  battery electric vehicles,'' \emph{Transportation Research Part B:
  Methodological}, vol. 103, pp. 30--55, 2017.

\bibitem{hof2017solving}
J.~Hof, M.~Schneider, and D.~Goeke, ``Solving the battery swap station
  location-routing problem with capacitated electric vehicles using an avns
  algorithm for vehicle-routing problems with intermediate stops,''
  \emph{Transportation Research Part B: Methodological}, vol.~97, pp. 102--112,
  2017.

\bibitem{nie2016optimization}
Y.~M. Nie, M.~Ghamami, A.~Zockaie, and F.~Xiao, ``Optimization of incentive
  polices for plug-in electric vehicles,'' \emph{Transportation Research Part
  B: Methodological}, vol.~84, pp. 103--123, 2016.

\bibitem{he2014network}
F.~He, Y.~Yin, and S.~Lawphongpanich, ``Network equilibrium models with battery
  electric vehicles,'' \emph{Transportation Research Part B: Methodological},
  vol.~67, pp. 306--319, 2014.

\bibitem{xu2017network}
M.~Xu, Q.~Meng, and K.~Liu, ``Network user equilibrium problems for the mixed
  battery electric vehicles and gasoline vehicles subject to battery swapping
  stations and road grade constraints,'' \emph{Transportation Research Part B:
  Methodological}, vol.~99, pp. 138--166, 2017.

\bibitem{xu2017joint}
M.~Xu, Q.~Meng, K.~Liu, and T.~Yamamoto, ``Joint charging mode and location
  choice model for battery electric vehicle users,'' \emph{Transportation
  Research Part B: Methodological}, vol. 103, pp. 68--86, 2017.

\bibitem{momtazpour2012coordinated}
M.~Momtazpour, P.~Butler, M.~S. Hossain, M.~C. Bozchalui, N.~Ramakrishnan, and
  R.~Sharma, ``Coordinated clustering algorithms to support charging
  infrastructure design for electric vehicles,'' in \emph{Proceedings of the
  ACM SIGKDD International Workshop on Urban Computing}.\hskip 1em plus 0.5em
  minus 0.4em\relax ACM, 2012, pp. 126--133.

\bibitem{ip2010optimization}
A.~Ip, S.~Fong, and E.~Liu, ``Optimization for allocating bev recharging
  stations in urban areas by using hierarchical clustering,'' in \emph{2010 6th
  International conference on advanced information management and service
  (IMS)}.\hskip 1em plus 0.5em minus 0.4em\relax IEEE, 2010, pp. 460--465.

\bibitem{xi2013simulation}
X.~Xi, R.~Sioshansi, and V.~Marano, ``Simulation--optimization model for
  location of a public electric vehicle charging infrastructure,''
  \emph{Transportation Research Part D: Transport and Environment}, vol.~22,
  pp. 60--69, 2013.

\bibitem{jung2014stochastic}
J.~Jung, J.~Y. Chow, R.~Jayakrishnan, and J.~Y. Park, ``Stochastic dynamic
  itinerary interception refueling location problem with queue delay for
  electric taxi charging stations,'' \emph{Transportation Research Part C:
  Emerging Technologies}, vol.~40, pp. 123--142, 2014.

\bibitem{dong2014charging}
J.~Dong, C.~Liu, and Z.~Lin, ``Charging infrastructure planning for promoting
  battery electric vehicles: An activity-based approach using multiday travel
  data,'' \emph{Transportation Research Part C: Emerging Technologies},
  vol.~38, pp. 44--55, 2014.

\bibitem{sweda2011agent}
T.~Sweda and D.~Klabjan, ``An agent-based decision support system for electric
  vehicle charging infrastructure deployment,'' in \emph{2011 IEEE Vehicle
  Power and Propulsion Conference}.\hskip 1em plus 0.5em minus 0.4em\relax
  IEEE, 2011, pp. 1--5.

\bibitem{asamer2016optimizing}
J.~Asamer, M.~Reinthaler, M.~Ruthmair, M.~Straub, and J.~Puchinger,
  ``Optimizing charging station locations for urban taxi providers,''
  \emph{Transportation Research Part A: Policy and Practice}, vol.~85, pp.
  233--246, 2016.

\bibitem{adler2016electric}
J.~D. Adler, P.~B. Mirchandani, G.~Xue, and M.~Xia, ``The electric vehicle
  shortest-walk problem with battery exchanges,'' \emph{Networks and Spatial
  Economics}, vol.~16, no.~1, pp. 155--173, 2016.

\bibitem{he2012optimal}
Y.~He, B.~Venkatesh, and L.~Guan, ``Optimal scheduling for charging and
  discharging of electric vehicles,'' \emph{IEEE transactions on smart grid},
  vol.~3, no.~3, pp. 1095--1105, 2012.

\bibitem{wang2016path}
T.-G. Wang, C.~Xie, J.~Xie, and T.~Waller, ``Path-constrained traffic
  assignment: A trip chain analysis under range anxiety,'' \emph{Transportation
  Research Part C: Emerging Technologies}, vol.~68, pp. 447--461, 2016.

\bibitem{wen2016modeling}
Y.~Wen, D.~MacKenzie, and D.~R. Keith, ``Modeling the charging choices of
  battery electric vehicle drivers by using stated preference data,''
  \emph{Transportation Research Record}, vol. 2572, no.~1, pp. 47--55, 2016.

\bibitem{yu2016modeling}
H.~Yu and D.~MacKenzie, ``Modeling charging choices of small-battery plug-in
  hybrid electric vehicle drivers by using instrumented vehicle data,''
  \emph{Transportation Research Record}, vol. 2572, no.~1, pp. 56--65, 2016.

\bibitem{he2015deploying}
F.~He, Y.~Yin, and J.~Zhou, ``Deploying public charging stations for electric
  vehicles on urban road networks,'' \emph{Transportation Research Part C:
  Emerging Technologies}, vol.~60, pp. 227--240, 2015.

\bibitem{he2018optimal}
J.~He, H.~Yang, T.-Q. Tang, and H.-J. Huang, ``An optimal charging station
  location model with the consideration of electric vehicles driving range,''
  \emph{Transportation Research Part C: Emerging Technologies}, vol.~86, pp.
  641--654, 2018.

\bibitem{liu2018network}
Z.~Liu and Z.~Song, ``Network user equilibrium of battery electric vehicles
  considering flow-dependent electricity consumption,'' \emph{Transportation
  Research Part C: Emerging Technologies}, vol.~95, pp. 516--544, 2018.

\bibitem{hajibabai2019patrol}
L.~Hajibabai and D.~Saha, ``Patrol route planning for incident response
  vehicles under dispatching station scenarios,'' \emph{Computer-Aided Civil
  and Infrastructure Engineering}, vol.~34, no.~1, pp. 58--70, 2019.

\bibitem{mehrabipour2019decomposition}
M.~Mehrabipour, L.~Hajibabai, and A.~Hajbabaie, ``A decomposition scheme for
  parallelization of system optimal dynamic traffic assignment on urban
  networks with multiple origins and destinations,'' \emph{Computer-Aided Civil
  and Infrastructure Engineering}, vol.~34, no.~10, pp. 915--931, 2019.

\bibitem{lu2016multilevel}
J.~Lu, J.~Han, Y.~Hu, and G.~Zhang, ``Multilevel decision-making: A survey,''
  \emph{Information Sciences}, vol. 346, pp. 463--487, 2016.

\bibitem{bialas1984two}
W.~F. Bialas and M.~H. Karwan, ``Two-level linear programming,''
  \emph{Management science}, vol.~30, no.~8, pp. 1004--1020, 1984.

\bibitem{hajibabai2014joint}
L.~Hajibabai, Y.~Bai, and Y.~Ouyang, ``Joint optimization of freight facility
  location and pavement infrastructure rehabilitation under network traffic
  equilibrium,'' \emph{Transportation Research Part B: Methodological},
  vol.~63, pp. 38--52, 2014.

\bibitem{mirheli2020charging}
A.~Mirheli and L.~Hajibabai, ``Charging infrastructure and pricing strategy:
  How to accommodate different perspectives?'' in \emph{2020 IEEE 23rd
  International Conference on Intelligent Transportation Systems (ITSC)}.\hskip
  1em plus 0.5em minus 0.4em\relax IEEE, 2020, pp. 1--5.

\bibitem{bardaka2020reimagining}
E.~Bardaka, L.~Hajibabai, and M.~P. Singh, ``Reimagining ride sharing:
  Efficient, equitable, sustainable public microtransit,'' \emph{IEEE Internet
  Computing}, vol.~24, no.~5, pp. 38--44, 2020.

\bibitem{anandalingam1990solution}
G.~Anandalingam and D.~White, ``A solution method for the linear static
  stackelberg problem using penalty functions,'' \emph{IEEE Transactions on
  Automatic Control}, vol.~35, no.~10, pp. 1170--1173, 1990.

\bibitem{white1993penalty}
D.~J. White and G.~Anandalingam, ``A penalty function approach for solving
  bi-level linear programs,'' \emph{Journal of Global Optimization}, vol.~3,
  no.~4, pp. 397--419, 1993.

\bibitem{mirheli2019utilization}
A.~Mirheli and L.~Hajibabai, ``Utilization management and pricing of parking
  facilities under uncertain demand and user decisions,'' \emph{IEEE
  transactions on Intelligent Transportation Systems}, 2019.

\bibitem{mirheli2018development}
A.~Mirheli, L.~Hajibabai, and A.~Hajbabaie, ``Development of a signal-head-free
  intersection control logic in a fully connected and autonomous vehicle
  environment,'' \emph{Transportation Research Part C: Emerging Technologies},
  vol.~92, pp. 412--425, 2018.

\bibitem{hajibabai2016dynamic}
L.~Hajibabai and Y.~Ouyang, ``Dynamic snow plow fleet management under
  uncertain demand and service disruption,'' \emph{IEEE Transactions on
  Intelligent Transportation Systems}, vol.~17, no.~9, pp. 2574--2582, 2016.

\bibitem{hejazi2002linear}
S.~R. Hejazi, A.~Memariani, G.~Jahanshahloo, and M.~M. Sepehri, ``Linear
  bilevel programming solution by genetic algorithm,'' \emph{Computers \&
  Operations Research}, vol.~29, no.~13, pp. 1913--1925, 2002.

\bibitem{hajibabai2013integrated}
L.~Hajibabai and Y.~Ouyang, ``Integrated planning of supply chain networks and
  multimodal transportation infrastructure expansion: model development and
  application to the biofuel industry,'' \emph{Computer-Aided Civil and
  Infrastructure Engineering}, vol.~28, no.~4, pp. 247--259, 2013.

\bibitem{xu2014exact}
P.~Xu and L.~Wang, ``An exact algorithm for the bilevel mixed integer linear
  programming problem under three simplifying assumptions,'' \emph{Computers \&
  operations research}, vol.~41, pp. 309--318, 2014.

\bibitem{bard1988convex}
J.~F. Bard, ``Convex two-level optimization,'' \emph{Mathematical programming},
  vol.~40, no. 1-3, pp. 15--27, 1988.

\bibitem{edmunds1991algorithms}
T.~A. Edmunds and J.~F. Bard, ``Algorithms for nonlinear bilevel mathematical
  programs,'' \emph{IEEE transactions on Systems, Man, and Cybernetics},
  vol.~21, no.~1, pp. 83--89, 1991.

\bibitem{jan1994nonlinear}
R.-H. Jan and M.-S. Chern, ``Nonlinear integer bilevel programming,''
  \emph{European Journal of Operational Research}, vol.~72, no.~3, pp.
  574--587, 1994.

\bibitem{thirwani1997algorithm}
D.~Thirwani and S.~Arora, ``An algorithm for the integer linear fractional
  bilevel programming problem,'' \emph{Optimization}, vol.~39, no.~1, pp.
  53--67, 1997.

\bibitem{mersha2011direct}
A.~G. Mersha and S.~Dempe, ``Direct search algorithm for bilevel programming
  problems,'' \emph{Computational Optimization and Applications}, vol.~49,
  no.~1, pp. 1--15, 2011.

\bibitem{farvaresh2011single}
H.~Farvaresh and M.~M. Sepehri, ``A single-level mixed integer linear
  formulation for a bi-level discrete network design problem,''
  \emph{Transportation Research Part E: Logistics and Transportation Review},
  vol.~47, no.~5, pp. 623--640, 2011.

\bibitem{al1992global}
F.~A. Al-Khayyal, R.~Horst, and P.~M. Pardalos, ``Global optimization of
  concave functions subject to quadratic constraints: an application in
  nonlinear bilevel programming,'' \emph{Annals of Operations Research},
  vol.~34, no.~1, pp. 125--147, 1992.

\bibitem{mitsos2008global}
A.~Mitsos, P.~Lemonidis, and P.~I. Barton, ``Global solution of bilevel
  programs with a nonconvex inner program,'' \emph{Journal of Global
  Optimization}, vol.~42, no.~4, pp. 475--513, 2008.

\bibitem{xie2018long}
F.~Xie, C.~Liu, S.~Li, Z.~Lin, and Y.~Huang, ``Long-term strategic planning of
  inter-city fast charging infrastructure for battery electric vehicles,''
  \emph{Transportation Research Part E: Logistics and Transportation Review},
  vol. 109, pp. 261--276, 2018.

\bibitem{Bureau1970}
BPR, ``U.s. bureau of public roads. urban transportation planning.''
  \emph{General Information and Introduction to System 360. Washington. D.C.
  (U.S. Department of Transportation)}, 1970.

\bibitem{oluwole2006rigorous}
O.~O. Oluwole, B.~Bhattacharjee, J.~E. Tolsma, P.~I. Barton, and W.~H. Green,
  ``Rigorous valid ranges for optimally reduced kinetic models,''
  \emph{Combustion and Flame}, vol. 146, no. 1-2, pp. 348--365, 2006.

\bibitem{lin2009global}
Y.~Lin and L.~Schrage, ``The global solver in the lindo api,''
  \emph{Optimization Methods \& Software}, vol.~24, no. 4-5, pp. 657--668,
  2009.

\end{thebibliography}

\vspace{-1mm}
\appendices
\section{Proof of Proposition 1} \label{sec:appProof}
\begin{proof}
	We let $\bar{F}^{u} = F^{u*} + \bar{\varepsilon}_{F}^{u}$. The lower bounding procedure globally generates points $\bar{z} \in U^{\psi}(F^{u*} + \tilde{\varepsilon})$. Assuming $\tilde{\varepsilon} < \bar{\varepsilon}_{F}^{u}$, we have $\bar{z} \in U^{\psi}(\bar{F}^{u})$. The third assumption in Section \ref{ExactMethod} guarantees that sets $U^{k}$ and points $(\boldsymbol{x}^{k},\boldsymbol{z}^{k},\boldsymbol{v}^{k})$ can build the parametric upper bounds of optimal solution to the lower-level problem.
	Besides, adding cuts associated with newly defined sets and points in iteration $k$ leads to tightening the lower bound.
	We let $(\hat{\boldsymbol{y}},\hat{\boldsymbol{\eta}},\hat{\boldsymbol{p}},\hat{\boldsymbol{x}},\hat{\boldsymbol{z}},\hat{\boldsymbol{v}})$ refer to the new solution obtained from the lower bounding procedure, based on newly added cuts.
	Solution $(\hat{\boldsymbol{y}},\hat{\boldsymbol{\eta}},\hat{\boldsymbol{p}},\hat{\boldsymbol{x}},\hat{\boldsymbol{z}},\hat{\boldsymbol{v}})$ will be $\varepsilon$-optimal if $\hat{\boldsymbol{y}} = \bar{\boldsymbol{y}},\hat{\boldsymbol{\eta}} = \bar{\boldsymbol{\eta}}$ and $\hat{\boldsymbol{p}}$ is sufficiently close to $\bar{\boldsymbol{p}}$.
	Besides, the feasible region of lower-level problem does not depend on continuous upper-level decision variable $\boldsymbol{p}$. 
	Hence, according to Lemma 3 in \citep{mitsos2010global}, there exists a pair $(\bar{\boldsymbol{y}}, \bar{\boldsymbol{\eta}})$ such that point $(\boldsymbol{x}^{k},\boldsymbol{z}^{k},\boldsymbol{v}^{k})$ generated in \eqref{eq:secondStepObj}-\eqref{eq:secondStepCons4} satisfies
	\vspace{-1mm}
	\begin{align}
		& 
		\eqref{eqref:LLCons2d}-\eqref{eq:BPR},\,\eqref{eqref:LLbinary}-\eqref{eqref:rangeAnxiety6},\,\eqref{eqref:LLCons2c}-\eqref{eqref:LLNonNeg1},\,\mbox{and}\,\nonumber\\
		&
		\sum_{a \in A} \left(\int_{0}^{v_{a}^{t,k}} R_{a}^{t}(\omega)\,d\omega+ \gamma\,\sum_{i \in N}\sum_{od \in OD} \bar{p}_{i}^{t}\,x_{a}^{t,od,i,k}\right) \nonumber\\
		&\epc \epc \epc \epc \epc \epc \epc \epc \epc \le {F}^{l*}(\bar{\boldsymbol{y}},\bar{\boldsymbol{\eta}},\bar{\boldsymbol{p}}) + \varepsilon_{F2}^{l}. \label{eq:proof1}
		\vspace{-1mm}
	\end{align}
	%
    %
	Based on assumption $\varepsilon_{F}^{l}-\tilde{\varepsilon}-\varepsilon_{F2}^{l} > 0$, there exists $\delta_{1} > 0$: 
	\begin{align}
	 &{F}^{l*}(\bar{\boldsymbol{y}},\bar{\boldsymbol{\eta}},\bar{\boldsymbol{p}}) \le {F}^{l*}(\bar{\boldsymbol{y}},\bar{\boldsymbol{\eta}},{\boldsymbol{p}})\nonumber\\
	 &+ .5(\varepsilon_{F}^{l}-\tilde{\varepsilon}-\varepsilon_{F2}^{l}), \forall \boldsymbol{p}:
	||\boldsymbol{p} - \bar{\boldsymbol{p}}|| < \delta_{1}. \label{eq:proof2}
	\end{align}
	By the continuity of \\
	$\sum_{a \in A} \left(\int_{0}^{v_{a}^{t,k}} R_{a}^{t}(\omega)\,d\omega+ \gamma\,\sum_{i \in N}\sum_{od \in OD} \bar{p}_{i}^{t}\,x_{a}^{t,od,i,k}\right)$ on the host set of $\boldsymbol{p}$, we can derive the following for $\delta_{2} > 0$:
	\vspace{-1mm}
	\begin{align}
		&\sum_{a \in A} \left(\int_{0}^{v_{a}^{t,k}} R_{a}^{t}(\omega)\,d\omega+ \gamma\,\sum_{i \in N}\sum_{od \in OD} {p}_{i}^{t}\,x_{a}^{t,od,i,k}\right)\nonumber\\
		&\le \sum_{a \in A} \left(\int_{0}^{v_{a}^{t,k}} R_{a}^{t}(\omega)\,d\omega+ \gamma\,\sum_{i \in N}\sum_{od \in OD} \bar{p}_{i}^{t}\,x_{a}^{t,od,i,k}\right) \nonumber\\
		&
		+ .5(\varepsilon_{F}^{l}-\tilde{\varepsilon}-\varepsilon_{F2}^{l}),\, \forall \boldsymbol{p}:
		||\boldsymbol{p} - \bar{\boldsymbol{p}}|| < \delta_{2}. \label{eq:proof3}
	\end{align}
	\vspace{-2mm}
	According to \eqref{eq:proof1}-\eqref{eq:proof3}, we can conclude
	%
	\begin{align}
	&\sum_{a \in A} \left(\int_{0}^{v_{a}^{t,k}} R_{a}^{t}(\omega)\,d\omega+ \gamma\,\sum_{i \in N}\sum_{od \in OD} {p}_{i}^{t}\,x_{a}^{t,od,i,k}\right)\nonumber\\
	& \le {F}^{l*}(\bar{\boldsymbol{y}},\bar{\boldsymbol{\eta}},{\boldsymbol{p}})+ \varepsilon_{F}^{l}-\tilde{\varepsilon},
	\forall \boldsymbol{p}: ||\boldsymbol{p}-\bar{\boldsymbol{p}}|| < \text{min}\{\delta_{1}, \delta_{2}\}. \nonumber \label{eq:proof4}
	\end{align}
	%
	On the other hand, according to \eqref{eq:proof1}, points $(\boldsymbol{x}^{k},\boldsymbol{z}^{k},\boldsymbol{v}^{k})$ are $\varepsilon$-optimal to the lower-level problem for $\forall(\boldsymbol{y} = \bar{\boldsymbol{y}}, \boldsymbol{\eta} = \bar{\boldsymbol{\eta}}),\, \forall \boldsymbol{p}:
	||\boldsymbol{p} - \bar{\boldsymbol{p}}|| < \delta = \text{min}\{\delta_{1}, \delta_{2}\}$.
	Thus, after a finite horizon, if the lower bound is feasible, a point $(\hat{\boldsymbol{y}},\hat{\boldsymbol{\eta}},\hat{\boldsymbol{p}},\hat{\boldsymbol{x}},\hat{\boldsymbol{z}},\hat{\boldsymbol{v}})$, with $\hat{\boldsymbol{y}} = \bar{\boldsymbol{y}},\hat{\boldsymbol{\eta}} = \bar{\boldsymbol{\eta}}$, and $\boldsymbol{p}:
	||\boldsymbol{p} - \bar{\boldsymbol{p}}|| < \delta$ will be generated. Since this point is the solution to
	\eqref{eq:exactObj1}-\eqref{eq:relaxedCons}, the following inequality is valid:
    %
	$F^{l}(\hat{\boldsymbol{y}},\hat{\boldsymbol{\eta}},\hat{\boldsymbol{p}},\hat{\boldsymbol{x}},\hat{\boldsymbol{z}},\hat{\boldsymbol{v}}) \le F^{l*}(\hat{\boldsymbol{y}},\hat{\boldsymbol{\eta}},\hat{\boldsymbol{p}}) + \varepsilon_{F}^{l}-\tilde{\varepsilon}.$
	%
	
	Additionally, we have $\text{LBD} \ge F^{u}(\hat{\boldsymbol{y}},\hat{\boldsymbol{\eta}},\hat{\boldsymbol{p}},\hat{\boldsymbol{x}},\hat{\boldsymbol{z}},\hat{\boldsymbol{v}}) -\tilde{\varepsilon}$. By the feasibility of point $(\hat{\boldsymbol{y}},\hat{\boldsymbol{\eta}},\hat{\boldsymbol{p}},\hat{\boldsymbol{x}},\hat{\boldsymbol{z}},\hat{\boldsymbol{v}})$ in the bi-level program \eqref{eqref:ULDisutility1}-\eqref{eq:BPR},\,\eqref{eqref:LLbinary}-\eqref{eqref:rangeAnxiety6},\,\eqref{eqref:UpperLevelObj}-\eqref{eqref:LLNonNeg1},\,\mbox{and}\, \eqref{eqref:waitingApprox}, the upper bounding procedure 
	\eqref{eq:upperBoundObj}-\eqref{eq:upperBound8} generates an upper bound as $\text{UBD} \le F^{u}(\hat{\boldsymbol{y}},\hat{\boldsymbol{\eta}},\hat{\boldsymbol{p}},\hat{\boldsymbol{x}},\hat{\boldsymbol{z}},\hat{\boldsymbol{v}}) + \tilde{\varepsilon}$. Therefore, the assumption $\tilde{\varepsilon} < \varepsilon_{F}^{u}/2$ will lead to $\text{UBD} - \text{LBD} \le 2\tilde{\varepsilon} \le \varepsilon_{F}^{u}$, which confirms finitely termination of the global optimization algorithm. 
	\end{proof}

\section{Nomenclature} \label{sec:app1}
\vspace{-5mm}
\begin{table}[H] 
	\caption{Definitions of sets, decision variables and parameters.}
	\vspace{-5mm}
	\begin{center}
		\tiny
		\begin{tabular}{@{}ll}
			\hline\hline
			\textbf{Sets}\\[0.0ex]
			\hline
			$\Gamma$ & Set of all time steps; i.e., $\{0, 1, \dots, T-1\}$  \\
			$N$ & Set of nodes \\
			$A$ & Set of arcs\\
			$A_{i}^{-}$, $A_{i}^{+}$ & Set of inbound and outbound network arcs to/from node $i \in N$\\
			$OD$ & Set of origin-destination \\
			$\lambda_{od}^{t}$ & Set of users with $od \in OD$ at time $t \in \Gamma$ \\[1.0ex]
			
			\textbf{Decision variables}\\[0.0ex]
			\hline
			$y_{i}$ & Binary variable, 1 if there is a  facility at node $i \in N$, or 0 otherwise  \\
			$\eta_{i}$ & Number of chargers at node $i \in N$\\
			$p_{i}^t$ & Price of getting charge at node $i \in N$ at time $t \in \Gamma$\\
			$\sigma_{od}^{t}$ & Equilibrium disutility identical for users on $od \in OD$ at $t \in \Gamma$\\
			$x_{a}^{t,od,i}$ & Flow of arc $a\in A$ on $od\in OD$ to charge at node $i\in N$ at time $t\in \Gamma$\\
			$z_{a}^{t,od}$ & Flow of arc $a \in A$ on $od \in OD$ at time $t \in \Gamma$\\
			$v_{a}^{t}$ & Aggregated flow of arc $a \in A$ at time $t \in \Gamma$\\
			$R_{a}^{t}(v_{a}^{t})$ & Travel time of arc $a \in A$ at time $t \in \Gamma$\\
			$e_{a}^{t,od}$ & Binary variable, 1 if arc $a \in A$ is on a feasible path for EV users\\
			&on $od \in OD$ at time $t \in \Gamma$, or 0 otherwise\\
			$u_{i}^{od}$ & Maximum distance traveled from the last visited charging facility located\\
			&at node $i \in N$ with $od \in OD$\\
			$u_{i}^{'od}$ & Dummy variable, 0 at charging facility located at $i \in N$\\
			$U$ & Host set of all upper-level variables $(\boldsymbol{y},\boldsymbol{\eta},\boldsymbol{p})$\\
			$L$ & Host set of all lower-level variables $(\boldsymbol{x},\boldsymbol{z},\boldsymbol{v})$\\[1.0ex]
		
		\textbf{Parameters}\\[0.0ex]
			\hline
			$C_{i}$ & Unit cost of building charging facility at node $i \in N$ \\
			$f_{i}^{t}$ & The occupancy of charging facility at node $i \in N$ at time $t \in \Gamma$\\
			$\hat{\eta}_i^t$ & Available capacity of charging facility at node $i \in N$ at time $t \in \Gamma$\\
			$M$ & Large positive constant \\
			$B$ & Available budget \\
			$l_{i}$ & Lower bound for charging price at node $i \in N$\\
			$u_{i}$ & Upper bound for charging price at node $i \in N$\\
			$\Delta$ & Driving range of the EVs \\
			$g_{od}^{t}$ & Linear demand curve's intercept for EV users with $od \in OD$ at time $t \in \Gamma$ \\
			$b$ & The elasticity coefficient of the demand function \\
			$\eta_{max}$ & Maximum physical capacity \\
			$\theta$ & Average charging duration \\
			$\xi_{i}^{t}$ & EV users' arrival rate in node $i \in N$ at time $t \in \Gamma$\\
			$\delta_a$ & Length of arc $a \in A$\\
			$q,w$ & BPR function parameters\\
			$\nu$ & Time window for finding an available charger\\
			$c_a$ & Traffic capacity of arc $a \in A$\\
			$\kappa$ & Lower bound for the probability of finding an available charger\\
			$\Omega_i$ & The slope of the linear waiting function at node $i \in N$\\
			$\mu$ & The update factor in subroutine\\
			\hline
		\end{tabular}
		\label{table:definitions}
	\end{center}
\end{table}

\end{document}